\documentclass[12pt,amssymb, amscd, amstheorem,, amsmath, amstext]{amsart}

\usepackage{amsmath}
\usepackage{amssymb}
\usepackage{amsthm}
\usepackage{color}
\usepackage[all]{xy}
\usepackage{graphicx}

\setlength{\topmargin}{0cm} \setlength{\textheight}{21cm}
\setlength{\oddsidemargin}{0.5cm}
\setlength{\evensidemargin}{0.5cm} \setlength{\textwidth}{15cm}
\setlength{\arraycolsep}{2pt}

\newtheorem{theorem}{Theorem}
\newtheorem{definition}{Definition}
\newtheorem{lemma}{Lemma}
\newtheorem{corollary}{Corollary}
\newtheorem{prop}{Proposition}
\newtheorem{remark}{Remark}

\def\TE{\mathcal TE}

\def\E{\mathcal E}

\def\CC{\mathbb C}

\begin{document}

\title[Bounded Geometry and $(p,q)$-Exponential Maps]{Bounded Geometry and Characterization
of post-singularly Finite $(p,q)$-Exponential Maps}

\author{Tao Chen, Yunping Jiang, and Linda Keen}

\begin{abstract}
In this paper we define a topological class of branched covering maps of the plane called
{\em topological exponential maps of type $(p,q)$} and denoted by $\TE_{p,q}$, where $p\geq 0$ and $q\geq 1$.  
We follow the framework given in~\cite{Ji} to study the  problem of combinatorially characterizing an entire map 
$P e^{Q}$, where $P$ is a polynomial of degree $p$ and $Q$ is a polynomial of degree $q$ using an {\em iteration scheme defined by Thurston} and
a {\em bounded geometry condition}.  
We first show that  an element $f \in {\TE}_{p,q}$  with finite post-singular set
is combinatorially equivalent to an entire map  $P e^{Q}$ if and only if
it has bounded geometry with compactness. Thus to complete the characterization, we only need to check that the bounded geometry actually implies compactness. We show this for some 
$f\in \TE_{p,1}$, $p\geq 1$. Our main result in this paper is that a post-singularly finite map $f$ in $\TE_{0,1}$ or a post-singularly finite map $f$ in $\TE_{p,1}$, $p\geq 1$, with only one non-zero simple breanch point $c$ such that either $c$ is periodic or $c$ and $f(c)$ are both not periodic, is combinatorially equivalent to a post-singularly finite entire map of either the form $e^{\lambda z}$ or the form $ \alpha z^{p}e^{\lambda z}$, where $\alpha=(-\lambda/p)^{p}e^{- \lambda (-p/\lambda)^{p}}$, respectively,  if and only if it has  bounded geometry. This is the first result in this direction for a family of transcendental holomorphic maps with critical points.
\end{abstract}
\maketitle

\section{Introduction}
\label{sec:intro}

Thurston asked the question ``when can we realize a given branched covering map as a holomorphic map in such a way that the post-critical sets correspond?" and answered it
for post-critically finite degree $d$ branched covers of the sphere~\cite{T,DH}.
  His theorem is that a postcritically finite degree $d\geq 2$  branched covering of
the sphere, with hyperbolic orbifold, is either combinatorially equivalent
to a rational map or there is a  topological obstruction, now called a ``Thurston obstruction''.  
The proof uses an iteration scheme defined for an appropriate Teichm\"uller space.  The rational map, when it exists,  is unique up to conjugation by a
M\"obius transformation.

Although Thurston's iteration scheme is well defined for transcendental maps,   his theorem  does not naturally extend to them because the proof uses the finiteness of both the  degree and the post-critical set
in a crucial way.    
In this paper, we study a class of  entire maps: maps of the form $P e^{Q}$ where $P$ and $Q$ are polynomials of degrees $p\geq 0$ and $q\geq 0$ respectively.
This class, which we denote by $\E_{p,q}$  includes the exponential and polynomials.    We call the topological family analogue to these analytic maps $\TE_{p,q}$ and define it below.  Thurston's question makes sense for this family and our main theorem is an answer to this question.
Convergence of the iteration scheme depends on a compactness condition defined in section~\ref{sec:suff}.   We approach the question of compactness, using the idea of ``bounded geometry''.  This point of view was originally outlined in ~\cite{Ji} where the bounded geometry condition is an intermediate step that connects various topological  obstructions with the characterization of rational maps. The introduction of  this intermediate step makes  understanding the characterization of rational maps relatively easier and the arguments are more straightforward (see~\cite{Ji,JZ,CJ}). It also gives insight into the characterization of entire and meromorphic maps.  
 
In this paper we apply our techniques to characterize the  class of post-singularly finite entire maps with exactly one asymptotic value and finitely many critical points, the model topological space $\TE_{p,q}$.  
Our first result is 

\medskip
\begin{theorem}~\label{main1}
A post-singularly finite map $f$ in $\TE_{p,q}$ is combinatorially equivalent to a post-singularly finite entire map of the form $Pe^{Q}$  if and only if it has bounded geometry and satisfies the compactness condition.
\end{theorem}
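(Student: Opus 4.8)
The plan is to recast the statement through Thurston's iteration scheme and then run the standard fixed-point argument, with bounded geometry and compactness supplying the two ingredients — control of the ``finite part'' of the Teichm\"uller orbit and control of its behaviour near the essential singularity — that are automatic in the classical post-critically finite rational case but fail for transcendental maps. Concretely, I would fix $f\in\TE_{p,q}$ post-singularly finite with post-singular set $X=X_f\subset\CC$, form the Teichm\"uller space $\T_f$ of the plane marked at $X$ (with $\infty$ kept as a distinguished point, since it is the essential singularity of the model $Pe^{Q}$), and define the Thurston operator $\sigma_f\colon\T_f\to\T_f$ by pulling complex structures back through $f$ and straightening them by the measurable Riemann mapping theorem. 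A point is fixed by $\sigma_f$ exactly when the corresponding marking conjugates $f$ to a holomorphic branched cover with the same local data, which — because $f$ has combinatorial type $\TE_{p,q}$ — is necessarily entire of the form $Pe^{Q}$, and post-singularly finite because combinatorial equivalence carries the marked orbit to the marked orbit. Thus the theorem reduces to: $f$ has bounded geometry and satisfies the compactness condition $\iff$ $\sigma_f$ has a fixed point. This reduction, and the fact that $\sigma_f$ is non-expanding for the Teichm\"uller metric (it factors through the covering $f$, which induces an isometric embedding, followed by a forgetful map that is a weak, and off a thin locus strict, contraction), are formally as in~\cite{DH,Ji}; I would only need to check that the essential singularity does not disturb this formal part.

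For the ``only if'' direction, suppose $f$ is combinatorially equivalent to $g=Pe^{Q}$. Then $\sigma_f$ has the fixed point $\tau_g$, so the iteration started at $\tau_g$ is the constant sequence, whence precompact — which is the content of the compactness condition — and by non-expansiveness orbits from nearby structures also stay bounded. The post-singular set of the genuine holomorphic map $g$ has bounded geometry because $g$ has bounded local distortion away from its finitely many critical points and its asymptotic value $0$, so the relative positions and the annular moduli separating the marked points stay pinched between two positive constants along the orbit. Transporting these estimates back through the combinatorial equivalence gives bounded geometry and compactness for $f$.

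The ``if'' direction is where the work lies. Start the iteration at an arbitrary $\tau_0$ and set $\tau_n=\sigma_f^{\,n}(\tau_0)$. The key claim is that $\{\tau_n\}$ stays in a compact set $K\subset\T_f$: bounded geometry keeps the marked points of the underlying finite-type ``core'' from colliding or splitting off, so classical Mumford-type compactness applies there, while the compactness condition prevents the end near the essential singularity $\infty$ from degenerating (the relevant annular moduli around $\infty$ stay bounded), so the two estimates splice to precompactness. Passing to a convergent subsequence $\tau_{n_k}\to\tau_\infty$, I would then invoke the quantitative step: on a neighbourhood of $K$ the operator $\sigma_f$ contracts the Teichm\"uller metric by a definite factor $\kappa<1$, a Schwarz/McMullen-type inequality for the inclusion of the $f$-pullback surface into the original that is strict and uniform precisely because $f$ has infinite degree and stays ``far from'' a conformal isomorphism on $K$. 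Uniform contraction on $K$ forces $d_{\T}(\tau_n,\tau_{n+1})\to 0$ and then that $\{\tau_n\}$ is Cauchy, so $\tau_n\to\tau_\infty$ with $\sigma_f(\tau_\infty)=\tau_\infty$; the fixed point yields the desired post-singularly finite $Pe^{Q}$.

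The main obstacle, and the only place where genuinely new input beyond~\cite{Ji,DH} is required, is the compactness claim in the previous paragraph: because $\infty$ is an essential singularity rather than a puncture of finite type, one cannot simply add it to the marked set and quote finite-dimensional Teichm\"uller compactness. Making precise that bounded geometry together with the compactness condition confines the orbit to $K$, and that the contraction factor $\kappa$ stays bounded below $1$ on a neighbourhood of $K$ despite the infinite degree, is the technical heart; I would attack it by the core/end decomposition above, estimating the annular moduli near $\infty$ directly from the compactness condition and importing the finite-part estimates from the post-critically finite theory.
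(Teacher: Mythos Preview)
Your overall framework --- reduce to a fixed point of the Thurston pull-back $\sigma_f$ on the finite-dimensional Teichm\"uller space $T_f=T(\hat{\mathbb C},P_f)$, and obtain the fixed point from a uniform contraction estimate --- matches the paper. But the sufficiency argument has a genuine gap, stemming from a misreading of what the ``compactness condition'' is.

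In the paper the compactness condition is \emph{not} a statement about annular moduli near $\infty$ or about the surfaces in the Thurston orbit; it is the statement that the sequence of holomorphic maps $E_n=w^{\mu_n}\circ f\circ (w^{\mu_{n+1}})^{-1}$ lies in a compact subset of the parameter space $\mathcal E_{p,q}$ (i.e.\ the coefficients of $P_n,Q_n$ stay bounded). There is no ``end near the essential singularity'' in $T_f$: since $P_f$ is finite and contains $\infty$, $T_f$ is simply the Teichm\"uller space of a sphere with finitely many marked points, and $\infty$ is one of them. Your proposed ``core/end decomposition'' therefore has nothing to decompose.

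More seriously, your key claim --- that bounded geometry together with compactness forces $\{\tau_n\}$ to lie in a compact $K\subset T_f$ --- is not justified and is essentially circular. Bounded geometry controls the image of the orbit in \emph{moduli} space (Mumford compactness: no short geodesics), but says nothing about the mapping-class ambiguity, so it does not by itself give precompactness in \emph{Teichm\"uller} space. Precompactness of $\{\tau_n\}$ in $T_f$ is in fact equivalent to convergence here, which is what you are trying to prove; you cannot take it as input.

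The paper avoids this circularity by never claiming precompactness of $\{\tau_n\}$. Instead it works on the cotangent side: the derivative $\sigma^*$ at $\tau_n$ is the push-forward operator $(E_n)_*$ on integrable quadratic differentials, given explicitly by the transfer formula $\phi(z)=\sum_{E_n(w)=z}\tilde\phi(w)/(E_n'(w))^2$. A direct triangle-inequality argument (using that $E_n$ has infinite degree while $\tilde q$ has finitely many poles) gives the strict inequality $\|(E_n)_*\tilde q\|<\|\tilde q\|$ at each step. To upgrade this to a \emph{uniform} bound $a_{\tau_0}=\sup_n\|(E_n)_*\|<1$, one takes a subsequence with $a_{n_i}\to a_{\tau_0}$; compactness of $\{E_n\}$ in $\mathcal E_{p,q}$ gives $E_{n_i}\to E$, and bounded geometry pins the possible poles of the extremal $q_{n_i,\max}$ in a compact set of configurations, so $q_{n_i,\max}\to q$ with $\|q\|=1$, whence $a_{\tau_0}=\|E_*q\|<1$. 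This yields $d_T(\tau_{n+1},\tau_n)\le a\, d_T(\tau_n,\tau_{n-1})$ along the orbit, hence Cauchy, hence a fixed point. The point is that compactness is used in parameter space $\mathcal E_{p,q}$ and bounded geometry in configuration space, and neither is promoted to compactness in $T_f$.
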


We next prove that the compactness hypothesis holds  for a special family.  Our main result is

\medskip
\begin{theorem}[Main Theorem]~\label{main2}
A post-singularly finite map $f$ in $\TE_{0,1}$ or a post-singularly finite map $f$ in $\TE_{p,1}$, $p\geq 1$, with only one non-zero simple branch point $c$ such that either $c$ is periodic or $c$ and $f(c)$ are both not periodic, is combinatorially equivalent to a post-singularly finite entire map of either the form $e^{\lambda z}$ or the form $ \alpha z^{p}e^{\lambda z}$, where $\alpha=(-\lambda/p)^{p}e^{- \lambda (-p/\lambda)^{p}}$, respectively,  if and only if it has  bounded geometry.
\end{theorem}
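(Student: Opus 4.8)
The plan is to deduce the Main Theorem from Theorem~\ref{main1} by verifying, for the two special families, that bounded geometry already forces the compactness condition of Section~\ref{sec:suff}: once that is done, Theorem~\ref{main1} produces a combinatorial equivalence with some entire map $Pe^{Q}$, and a short normalization identifies its form. For the normalization, note that conjugating by an affine map acts on $\E_{p,q}$ with $q=1$, and one uses this freedom to place the finite asymptotic value and (when $p\ge1$) the simple critical point in standard position: $Pe^{Q}$ becomes $e^{\lambda z}$ when $p=0$, and $\alpha z^{p}e^{\lambda z}$ with the stated $\alpha=\alpha(\lambda)$ when $p\ge1$, in which case $0$ is simultaneously the asymptotic value and a fixed point and $c=-p/\lambda$ is the simple critical point. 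Granting this, the ``only if'' direction is immediate --- a combinatorial equivalence with such a map is in particular one with an entire $Pe^{Q}$, so Theorem~\ref{main1} yields bounded geometry --- and the entire substance lies in the ``if'' direction, i.e. in the implication \emph{bounded geometry} $\Rightarrow$ \emph{compactness}, which is where the hypotheses on $f$ are used.

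To prove that implication I would run the Thurston iteration $\tau_{n}=\sigma^{n}(\tau_{0})$ on the Teichm\"uller space $\T$ modeled on $(\hat{\CC},P)$, where $P$ is the finite post-singular set enlarged by $\infty$ (and, when $p\ge1$, this set already contains the fixed point $0$). The compactness condition fails exactly when the marked surfaces $X_{n}=\hat{\CC}\setminus P$ leave every compact part of the moduli space, which by the standard compactness criterion for moduli of punctured spheres means a nontrivial multicurve $\Gamma_{n}\subset X_{n}$ gets pinched --- the hyperbolic lengths of its components tend to $0$, equivalently the moduli of its collars blow up --- and the point is to show bounded geometry forbids this. One tracks a hypothetical pinching curve through one step of pull-back by the topological model: near $\infty$ the pull-back of a thin annulus separating the asymptotic tract from the rest of $P$ is an unbounded annulus of comparable modulus, near the branch point $0$ of local degree $p$ and near the simple branch point $c$ the pull-back is locally a power map, and away from these finitely many points $f$ is a locally injective covering. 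Bounded geometry, by its definition, supplies lower bounds for the moduli of the finitely many configurations cut out by $P$ together with its first $f$-preimage; chasing the curve through the covering and combining with these bounds gives a uniform positive lower bound for the hyperbolic length of any essential curve in $X_{n}$, contradicting the pinching and hence establishing compactness.

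The case analysis, where the periodicity hypothesis enters, organizes the bookkeeping. For $f\in\TE_{0,1}$ one has $P=\{\infty\}\cup\{0,f(0),f^{2}(0),\dots\}$ with $0$ the asymptotic value; the only curves that can degenerate separate $\infty$ from a block of the finite orbit, and the exponential-type branching at $\infty$ confines them to finitely many homotopy classes whose moduli bounded geometry controls. For $f\in\TE_{p,1}$ with $p\ge1$ there are in addition the fixed point $0$ and the forward orbit of $c$: when $c$ is periodic that orbit is a cycle and the analysis is of the same controlled type, while when $c$ and $f(c)$ are both non-periodic the orbit has a preperiodic tail of length at least two, which is precisely what keeps a thin collar around the preperiodic part --- or a curve enclosing $\{c,f(c)\}$ --- from being collapsed under iterated pull-back. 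The excluded case, $f(c)$ periodic but $c$ not, is exactly the configuration in which such a collar could degenerate, and is why it is set aside at this stage.

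I expect the main obstacle to be the pull-back analysis at the asymptotic value. Because the model has infinite local degree there, one pull-back produces infinitely many preimage curves of a given curve, and one must argue that only finitely many of the resulting homotopy classes are essential for the marked set $P$ and that their moduli are exactly the quantities bounded geometry bounds --- that is, that the transcendental pull-back creates no new degeneration invisible to the bounded-geometry data. Making this precise while keeping the case list finite is the technical crux; finiteness of $P$ and the hypothesis on $c$ are what keep that list finite.
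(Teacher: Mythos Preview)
Your proposal rests on a misreading of what the ``compactness condition'' is. In the paper it is \emph{not} the statement that the punctured spheres $\hat{\CC}\setminus P_{f,n}$ remain in a compact part of moduli space; that statement is precisely the hyperbolic formulation of bounded geometry itself, so your argument as written is circular. The compactness condition (see Section~\ref{sec:suff}) is that the sequence of \emph{entire maps} $E_{n}=w^{\mu_{n}}\circ f\circ (w^{\mu_{n+1}})^{-1}$ lies in a compact subset of the parameter space $\E_{p,1}\subset\CC^{p+q+1}$. Concretely, since $E_{n}(z)=e^{\lambda_{n}z}$ (or $E_{n}(z)=\alpha_{n}z^{p}e^{\lambda_{n}z}$ with $\alpha_{n}$ determined by $\lambda_{n}$), the task is to produce two-sided bounds $0<\kappa\le|\lambda_{n}|\le K<\infty$. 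Bounded geometry controls only the configuration of the finite set $P_{f,n}$; it says nothing directly about $\lambda_{n}$, and in particular your pull-back/pinching analysis never touches this parameter.

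The paper's actual mechanism is arithmetic rather than geometric. One chooses points $c_{k_{1}},c_{k_{2}}\in P_{f}$ with $f(c_{k_{1}})=f(c_{k_{2}})$, joins them by an arc $\gamma$ avoiding $P_{f}$, and observes that $\delta=f(\gamma)$ is a closed loop whose winding number $\eta$ about $0$ is a topological invariant of the iteration (Lemma~\ref{winding1}). Writing $\eta$ as $\frac{1}{2\pi i}\int_{\gamma_{n+1}}E_{n}'/E_{n}\,dz$ and evaluating the logarithmic derivative $E_{n}'/E_{n}=p/z+\lambda_{n}$ gives an explicit identity relating $\lambda_{n}$ to $\eta$ and to the endpoints $c_{k_{1},n+1},c_{k_{2},n+1}\in P_{n+1}$; bounded geometry then bounds those endpoints and hence $|\lambda_{n}|$ from above. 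The lower bound on $|\lambda_{n}|$ comes separately, from the fact that a specific point such as $E_{n}(1)$ lies in $P_{n}$ and is bounded away from $0$. The periodicity hypothesis enters here, not in any collar argument: if $c$ is periodic then $c_{n}=-p/\lambda_{n}\in P_{n}$ and bounded geometry pins $\lambda_{n}$ directly; if $c$ and $f(c)$ are both non-periodic then $k_{1}\ge 1$, which is exactly what is needed for $c_{k_{1}}$ and $c_{k_{2}}$ to be distinct points of $P_{f}$ so that $|c_{k_{2},n+1}-c_{k_{1},n+1}|$ is bounded below. Your sketch contains none of this structure and would need to be replaced rather than refined.
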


\medskip
Our techniques involve adapting the Thurston iteration scheme to our situation.  We work with a fixed normalization.  There are two important parts to the proof of the main theorem (Theorem~\ref{main2}). The first is the proof of sufficiency under the assumptions of both bounded geometry and compactness. This part shows that bounded geometry together with the compactness assumption implies the convergence of the iteration scheme to an entire function of the same type (see section \ref{sec:suff}). Its proof involves  an analysis of  quadratic differentials associated to the functions in  the iteration scheme. The proof of the first part works for a more general post-singular finite map $f\in \TE_{p,q}$. The second part of the proof of the main theorem is in section~\ref{sec:proofmt} where we define a topological constraint. We prove in this part that the bounded geometry together with the topological constraint implies compactness. 
 
\begin{remark} 
Our main result is the first result to apply the Thurston iteration scheme to characterize  a family of transcendental holomorphic maps with critical points. Post-singularly finite maps $f$ in $\TE_{0,1}$ were also studied by Hubbard, Schleicher, and Shishikura~\cite{HSS} who used them to characterize a family of holomorphic maps with one asymptotic value, that is, the exponential family. In their study,  they used a different normalization; their functions take the form  $\lambda e^{z}$ so that they all have period $2\pi i$. They study the  limiting behavior of quadratic differentials associated to the exponential
 functions with finite post-singular set. They use a Levy cycle condition (a special type of Thurston's topological condition) to characterize when it is possible to realize a given exponential type map with finite post-singular set as an exponential map by combinatorial equivalence. Their calculations involve hyperbolic geometry.  
 
In this paper, by contrast, we normalize the  maps in the Thurston iteration scheme by assuming they all fix $0, 1, \infty$. We can therefore use spherical geometry in most our calculations rather than hyperbolic geometry. We are able to consider the more  general exponential maps  in  $\TE_{p,q}$.   The characterization theorem    for $f\in \TE_{p,1}$ in this paper,  is completely new. 
 \end{remark}  

\medskip
The paper is organized as follows. In \S2, we review the covering properties of $(p,q)$-exponential maps $E=Pe^{Q}$.
In \S3, we define the family $\TE_{p,q}$ of $(p,q)$-topological exponential maps $f$.
In \S4,
 we define the combinatorial equivalence between post-singularly
finite $(p,q)$-topological exponential maps and prove  there is a local quasiconformal $(p,q)$-topological exponential map in every combinatorial equivalence class of post-singularly finite $(p,q)$-topological exponential maps.
In \S5, we define the Teichm\"uller space $T_{f}$ for a post-singularly
finite $(p,q)$-topological exponential map $f$ and
in \S6, we introduce the induced map $\sigma_{f}$ from  the Teichm\"uller space $T_{f}$ into itself;  this is the crux of the Thurston iteration scheme.
In \S7, we define the concept of  ``bounded geometry''  and in \S8 we prove the necessity of the bounded geometry condition.
 In \S9, we give the proof of sufficiency assuming compactness.
The proofs we give in \S2-9 are for $(p,q)$-topological exponential maps $f$, $p\geq 0$ and $q\geq 0$.
 In \S10.1, \S10.2, and \S10.3, we define a topological constraint for the maps in our main theorem; this involves defining markings and a winding number.  
 We prove that the winding numbers are unchanged during iteration of the map $\sigma_f.$ Furthermore, in \S10.4 and \S10.5, we prove that the bounded geometry together with the topological constraint implies the compactness. This completes the proof of our main result.

In \S11, we make some  remarks about the relations between  ``bounded geometry''
and  ``canonical Thurston obstructions'' and between ``bounded geometry'' and  ``Levy cycles'' in the context of $\TE_{p,q}$.

 \medskip
{\bf Acknowledgement:} We would like to thank the referee whose careful reading and suggestions have much improved it.
The second and the third authors are partially supported by PSC-CUNY awards. The second author is also partially supported by the collaboration grant (\#199837) from the Simons Foundation, the CUNY collaborative incentive research grant (\#1861). This research is also partially supported by the collaboration grant (\#11171121) from the NSF of China
and a collaboration grant from Academy of Mathematics and Systems Science and the Morningside
Center of Mathematics at the Chinese Academy of Sciences.

\section{The space $\E_{p,q}$ of $(p,q)$-Exponential Maps}
\label{sec:epq}

\label{pq-exp maps}
We use the following notation:
  ${\mathbb C}$ is the complex plane, $\hat{\mathbb C}$ is the Riemann sphere and
  ${\mathbb C}^{*}$ is the complex plane  punctured at the origin.

A {\em $(p,q)$-exponential map} is an entire function of the form  $E =Pe^{Q}$ where $P$ and $Q$ are  polynomials of degrees $p\geq 0$ and $q\geq 0$ respectively such that  $p+q\geq 1$.  We use the notation ${\mathcal E}_{p,q}$ for the set of $(p,q)$-exponential maps.

Note that if $P(z)=a_0 + a_1 z + \ldots a_pz^p$, $Q(z)=b_0 + b_1 z + \ldots b_q z^q$, $\widehat{P}(z)=e^{b_0} P(z)$ and $\widehat{Q}(z)=Q(z)-b_0$ then
$$P(z) e^{Q(z)} = \widehat{P}e^{\widehat{Q}(z)}.$$
To avoid this ambiguity we always assume $b_0=0$.
If $q=0$, then $E$ is a polynomial of degree $p$.    Otherwise, $E$ is a transcendental entire function with essential singularity at infinity.

The growth rate of an entire function $f$ is defined as
$$
\limsup_{r\to \infty} \frac{\log \log M(r)}{\log r}
$$
where $M(r) =\sup_{|z|=r} |f(z)|$. It is easy to see that the growth rate of $E$ is $q$.

Recall that an asymptotic tract $V$ for an entire transcendental function $g$ is a simply connected unbounded domain  such that $g(V) \subset \hat\CC$ is conformally a punctured disk  $D \setminus \{a\}$ and the map $g:V \rightarrow g(V)$ is a universal cover.  The point $a$ is  the asymptotic value corresponding to the tract.    For functions $E$ in ${\mathcal E}_{p,q}$ we have

\begin{prop}
If $q\geq 1$, $E$ has $2q$ distinct asymptotic tracts that are separated by $2q$  rays.  Each tract maps to a punctured neighborhood of either zero or infinity and these are the only asymptotic values.
\end{prop}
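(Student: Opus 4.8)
The plan is to read off the behavior of $E=Pe^{Q}$ near $\infty$, where we normalize $Q(z)=b_{1}z+\dots+b_{q}z^{q}$ with $b_{q}=|b_{q}|e^{i\beta}\ne 0$. For $z=re^{i\theta}$ one has $\mathrm{Re}\,Q(z)=|b_{q}|r^{q}\cos(q\theta+\beta)+O(r^{q-1})$ and $\mathrm{Im}\,Q(z)=|b_{q}|r^{q}\sin(q\theta+\beta)+O(r^{q-1})$. The $2q$ rays $L_{k}=\{\arg z=\theta_{k}\}$ with $\theta_{k}=(\tfrac{\pi}{2}-\beta+k\pi)/q$, $k=0,\dots,2q-1$, solve $\cos(q\theta+\beta)=0$ and cut a neighborhood of $\infty$ into $2q$ open sectors $S_{0},\dots,S_{2q-1}$ of opening $\pi/q$, on which $\cos(q\theta+\beta)$ is alternately positive and negative. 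On a slightly shrunk sector with $\cos(q\theta+\beta)>0$ we get $\mathrm{Re}\,Q(z)\ge c\,r^{q}\to+\infty$, hence $|E(z)|=|P(z)|e^{\mathrm{Re}\,Q(z)}\to\infty$ since $|P(z)|$ is bounded below; on a shrunk sector with $\cos(q\theta+\beta)<0$ we get $\mathrm{Re}\,Q(z)\le -c\,r^{q}$, which dominates the polynomial growth of $|P|$, so $|E(z)|\to 0$. Thus the ``growing'' and ``decaying'' sectors alternate, $q$ of each.

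Next I would exhibit the tracts. Choose $R$ large enough that $Q'\ne 0$ on $\{|z|>R\}$ and that $Q$ is univalent on each $S_{k}\cap\{|z|>R\}$; the latter holds because $Q(z)=b_{q}z^{q}(1+o(1))$ and $z\mapsto z^{q}$ is univalent on a sector of opening $<2\pi/q$. On a decaying sector $S_{k}$ let $\psi=(Q|_{S_{k}})^{-1}$, defined on a region asymptotic to a left half-plane; then $\log E(\psi(\zeta))=\zeta+\log P(\psi(\zeta))=\zeta+O(\log|\zeta|)$, and a standard perturbation argument shows $\zeta\mapsto\log E(\psi(\zeta))$ is univalent on, with image containing, a half-plane $\{\mathrm{Re}\,u<-N\}$. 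Pulling this back gives a simply connected unbounded domain $V_{k}\subset S_{k}$ and a conformal isomorphism $\Phi_{k}\colon V_{k}\to\{\mathrm{Re}\,u<-N\}$ with $E|_{V_{k}}=\exp\circ\Phi_{k}$; composing with $\exp$ shows $E\colon V_{k}\to\{0<|w|<e^{-N}\}$ is a holomorphic universal cover, i.e.\ $V_{k}$ is an asymptotic tract over $0$. The analogous argument on a growing sector, using $\log E=Q+O(\log|z|)$ directly so that $\log E$ is univalent onto a region containing a right half-plane, produces an asymptotic tract over $\infty$ in each growing sector. Since, outside arbitrarily thin angular neighborhoods of the rays $L_{k}$, the modulus $|E|$ is either large or small, and these thin neighborhoods separate the decaying sectors from one another and likewise the growing sectors, these $2q$ domains exhaust the asymptotic tracts.

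It remains to see that $0$ and $\infty$ are the only asymptotic values. Suppose $a\in\CC$, $a\ne 0$, were an asymptotic value, realized by a curve $\gamma(t)\to\infty$ with $E(\gamma(t))\to a$. Then $|E(\gamma(t))|$ is bounded and bounded away from $0$, so $\mathrm{Re}\,Q(\gamma(t))=\log|E(\gamma(t))|-\log|P(\gamma(t))|=O(\log|\gamma(t)|)$; comparing with the expansion of $\mathrm{Re}\,Q$ forces $\cos(q\theta+\beta)\to 0$ along $\gamma$, so eventually $\gamma$ lies in an arbitrarily thin neighborhood of the rays and, being connected, stays near a single ray $L_{k_{0}}$, where $|\sin(q\theta+\beta)|\to 1$. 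Then $\mathrm{Im}\,Q(\gamma(t))=\pm|b_{q}|r^{q}(1+o(1))\to\pm\infty$, hence $\arg E(\gamma(t))=\mathrm{Im}\,Q(\gamma(t))+\arg P(\gamma(t))\to\pm\infty$, which is incompatible with $E(\gamma(t))\to a\ne 0$; this is the desired contradiction. (Alternatively, since $E$ has order $q$, the Denjoy--Carleman--Ahlfors theorem bounds the number of direct singularities of $E$ by $2q$, and we have already produced $2q$ of them.) The main obstacle is the perturbative step of the second paragraph: making precise the univalence of $\zeta\mapsto\log E(\psi(\zeta))$ and the identification of its image with a half-plane, so that the covering property in the definition of asymptotic tract is actually verified; the sector bookkeeping and the ruling out of other asymptotic values are comparatively routine.
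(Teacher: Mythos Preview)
Your proof is correct and follows essentially the same approach as the paper: identify the $2q$ Julia rays as the asymptotic directions where $\Re Q=0$, observe that the sign of $\Re Q$ alternates on the $2q$ sectors so that $|E|\to 0$ or $\infty$ there, and conclude that these sectors exhaust the possible tracts. The paper's argument is considerably terser than yours---it does not explicitly verify the universal-covering property in the definition of asymptotic tract, and it dispatches ``no other asymptotic values'' in one line (the complement of the sectors near $\infty$ is just the rays) rather than via your $\Im Q\to\pm\infty$ winding argument---so your write-up is in fact more complete than the paper's own proof.
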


\begin{proof}  From the growth rate of $E$ we see that for $|z|$ large, the behavior of the exponential dominates.   Since $Q(z) = b_q z^q + \mbox{ lower order terms} $,  in a neighborhood of infinity there are $2q$ branches of $\Re Q = 0$ asymptotic to equally spaced rays.  In the $2q$ sectors defined by these rays the signs of $\Re Q$ alternate.  If $\gamma(t)$ is a curve
such that $\lim_{t \to \infty} \gamma(t) = \infty$ and $\gamma(t)$ stays in one sector for all large $t$, then either $\lim_{t \to \infty}E(\gamma(t))=0$ or $\lim_{t \to \infty}E(\gamma(t))=\infty$, as $\Re Q$ is negative or positive in the sector.  It follows that there are exactly $q$ sectors that are asymptotic tracts for $0$ and $q$ sectors that are asymptotic tracts for infinity.    Because the complement of these tracts in a punctured neighborhood of infinity consists entirely of these rays, there can be no other asymptotic tracts.  \end{proof}

\begin{remark} The directions dividing the asymptotic tracts  are called {\em Julia rays} or {\em Julia directions} for $E$.  If $\gamma(t)$ tends to infinity along a Julia ray,  $E(\gamma(t))$ remains in a compact domain in the plane.   It spirals infinitely often around the origin.  \end{remark}

Two
$(p,q)$-exponential maps $E_{1}$ and $E_{2}$ are conformally equivalent if they are conjugate by a conformal automorphism $M$ of the Riemann sphere $\hat{\mathbb C}$, that is, $E_{1} =M\circ E_{2}\circ M^{-1}$. The automorphism $M$ must be a M\"obius transformation and it must fix both $0$ and $\infty$ so that it must be the affine stretch  map $M(z)=az$, $a \neq 0$. We are interested in conformal equivalence classes of maps, so by abuse of notation, we treat conformally equivalent $(p,q)$-exponential maps $E_{1}$ and $E_{2}$ as the same.

The critical points of $E=Pe^{Q}$ are the roots of $P'+PQ'=0$. Therefore, $E$ has $p+q-1$ critical points counted with multiplicity which we denote by
$$
\Omega_{E}=\{ c_{1}, \cdots, c_{p+q-1}\}.
$$
Note that if $E(z)=0$ then $P(z)=0$.  This in turn implies that if $c\in \Omega_E$ maps to $0$, then $c$ must also be a critical point of $P$.
Since $P$ has only $p-1$ critical points counted with multiplicity,
there must be at least $q$ points (counted with multiplicity) in $\Omega$ which are not mapped to $0$.
Denote by
$$
\Omega_{E,0}=\{ c_{1}, \cdots, c_{k}\}, \quad k\leq p-1,
$$
 the (possibly empty) subset of  $\Omega_E$ consisting of critical points such that $E(c_i)=0$.  Denote its complement in $\Omega_E$ by
$$
\Omega_{E,1} =\Omega_{E}\setminus \Omega_{E,0}=\{ c_{k+1}, \cdots, c_{p+q-1}\}.
$$

When $q=0$, $E$ is a polynomial. The {\em post-singular set} in this special case is  the same as the {\em post-critical set}. It is defined as
$$
P_{E} =\overline{\cup_{n\geq 1} E^{n}(\Omega_{E})}\cup\{\infty\}.
$$
To avoid trivial cases here we will assume that $\#(P_{E}) \geq 4$.  Conjugating by an affine map $z \to az+b$ of the complex plane,
	we normalize so that  $0, 1\in P_{E}$.

When $q=1$ and $p=0$, $\Omega_{E}=\emptyset$ and $\E_{0,1}$ consists of exponential maps $\alpha e^{\lambda z}$, $\alpha, \lambda \in \CC^*$. 
The {\em post-singular set} in this special case is defined as
$$
P_{E} =\overline{\cup_{n\geq 0} E^{n}(0)} \cup \{\infty\}.
$$
Conjugating by an affine stretch $z \mapsto \alpha z$ of the complex plane,
we normalize so that $E(0)=1$. Note that after this normalization the family takes the form
$e^{\lambda  z}$ , $\lambda \in \CC^{*}$.

When $q\geq 2$ and $p=0$ or when  $q\geq 1$ and $p\geq 1$, $\Omega_{E,1}$ is a non-empty set. Let
$$
{\mathcal V} =E(\Omega_{E,1}) =\{ v_{1}, \cdots, v_{m}\}
$$
denote the set of non-zero critical values of $E$.
The  {\em post-singular set}  for $E$  in  the general case  is now defined as

$$
P_{E} = \overline{\cup_{n\geq 0} E^{n}({\mathcal V}\cup \{ 0 \})} \cup \{\infty\}.
$$
We normalize  as follows:  \\

 If $E$ does not fix $0$, which is always true  if $q\geq 2$ and $p=0$,
we conjugate by an affine stretch $z \rightarrow az$ so that  $E(0)=1$.

If  $E(0)=0$,  there is a  critical point in  $c_{k+1}$ in $\Omega_{E,1}$  with $c_{k+1}\not=0$ and $v_{1}=E(c_{k+1})\not= 0$.
In this case we normalize so that $v_{1}=1$.  The family $\E_{1,1}$ consists of functions of the form  $\alpha z e^{\lambda z}$.
 After normalization  they  take  the form
$$
-\lambda e ze^{\lambda z}.
$$
An important family we consider in this paper is the family in $\E_{p,1}$, $p\geq 1$, where each map in this family 
has only one non-zero simple critical point. After normalization, the functions in this family take the form
$$
E(z)= \alpha z^{p} e^{\lambda z}, \quad \alpha=\Big( -\frac{\lambda}{p}\Big)^{p}  e^{- \lambda \Big(-\frac{p}{\lambda}\Big)^{p}}.
$$
This is the main family we will study in this paper. 

\section{Topological Exponential Maps of Type $(p,q)$}
\label{sec:Tpq}
We use the notation  ${\mathbb R}^{2}$ for the Euclidean plane.   We define the space ${\TE}_{p,q}$ of {\em topological exponential maps of type $(p,q)$}  with $p+q\geq 1$.    These are branched coverings with a single finite asymptotic value, normalized to be at zero, modeled on the maps in the holomorphic family $\E_{p,q}$.  For a full discussion of the covering properties for this family see Zakeri~\cite{Z}, and for a more general discussion of maps with finitely many asymptotic and critical values see Nevanlinna~\cite{Nev}.

If $q=0$, then ${\mathcal TE}_{p,0}$ consists of all topological polynomials $P$ of degree $p$:  these are degree $p$ branched coverings of the sphere such that $f^{-1}(\infty)=\{\infty\}$.

If $q=1$ and $p=0$, the space ${\TE}_{0,1}$ consists of universal covering maps $f: {\mathbb R}^{2}\to {\mathbb R}^{2}\setminus \{0\}$.  These are discussed at length in \cite{HSS}  where they are called topological exponential maps.

The polynomials $P$ and $Q$ contribute differently to the covering properties of maps in $\E_{p,q}$.  As we saw,  the degree of $Q$ controls the growth and behavior at infinity.    Using maps $e^{Q}$ as our model we first define  the space ${\TE}_{0,q}$.

\begin{definition}
If $q\geq 2$ and $p=0$, the space ${\TE}_{0,q}$ consists of topological branched covering maps $f: {\mathbb R}^{2}\to {\mathbb R}^{2}\setminus \{0\}$ satisfying the following conditions:
\begin{itemize}
\item[i)] The set of branch points,  $\Omega_{f} =\{c\in {\mathbb R}^{2}\;|\; \deg_{c}f\geq 2\}$  consists of $q-1$ points counted with multiplicity.
\item[ii)] Let ${\mathcal V} =\{ v_{1}, \cdots, v_{m}\} =f(\Omega_{f})\subset {\mathbb R}^{2}\setminus \{0\}$ be the set
           of distinct images of the branch points. For $i=1, \ldots, m$, let $L_{i}$  be a smooth topological ray in ${\mathbb R}^{2}\setminus \{0\}$ starting at  $v_{i}$  and extending to $\infty$ such that the collection of rays
            $\{L_{1}, \cdots, L_{m}\}$ are pairwise disjoint. Then
        \begin{enumerate}
              \item $f^{-1}(L_{i})$ consists of infinitely many  rays starting at points in the preimage set $f^{-1}(v_{i})$. If $x\in f^{-1}(v_{i})\cap \Omega_{f}$, there are $d_{x}=\deg_{x} f$ rays meeting at $x$ called {\em critical rays}.     If $x\in f^{-1}(v_{i})\setminus \Omega_{f}$, there is only one ray emanating from  $x$; it is called a {\em non-critical ray}.
              Set
              $$
              W=\mathbb R^2 \setminus  ( \cup_{i=1}^m L_i \cup \{0\}).
              $$
              \item The set of critical rays meeting at points in $\Omega_{f}$ divides $f^{-1}(W)$ into $q=1+\sum_{c\in \Omega_{f}} (d_{c}-1)$ open unbounded   connected components $W_{1}, \cdots, W_{q}$.
              \item[(3)] $f: W_{i}\to W $ is a universal covering for each $1\leq i\leq q$.
        \end{enumerate}
\end{itemize} \end{definition}

Note that the map restricted to each $W_i$ is a topological  model for the exponential map $z\mapsto e^{z}$ and the local degree at the critical points determines the number of $W_i$ attached at the point.

 We now define the space    ${\TE}_{p,q}$ in full generality where we assume $p>0$ and there is additional  behavior modeled on the role of  the new  critical points of $P e^{Q}$ introduced by  the non-constant polynomial $P$.

\medskip
\begin{definition}~\label{topexpdef}
If  $q\geq 1$ and $p\geq 1$, the space ${\TE}_{p,q}$ consists of topological branched covering maps $f: {\mathbb R}^{2}\to {\mathbb R}^{2}$ satisfying the following conditions:
\begin{itemize}
\item[i)] $f^{-1}(0)$ consists of $p$ points counted with multiplicity.
\item[ii)] The set of branch points, $\Omega_{f} =\{c\in {\mathbb R}^{2}\;|\; \deg_{c}f\geq 2\}$ consists of $p+q-1$  points counted with multiplicity.
\item[iii)] Let $\Omega_{f, 0} = \Omega_{f} \cap f^{-1}(0)$  be the $k<p$ branch points that map to $0$ and $\Omega_{f,1} =\Omega_{f}\setminus \Omega_{f,0}$  the $p+q-1-k$ branch points  that do not.   Note that $\Omega_{f,1}$ contains at least $q$ points and
  ${\mathcal V} =\{ v_{1}, \cdots, v_{m}\} =f(\Omega_{f,1})$  is contained in ${\mathbb R}^{2}\setminus \{0\}$. For $i=1, \ldots, m$, let $L_{i}$  be a smooth topological ray in ${\mathbb R}^{2}\setminus \{0\}$ starting at $v_{i}$ and extending to $\infty$ such that  the collection of rays  $\{L_{1}, \cdots, L_{m}\}$ are pairwise disjoint. Then
          \begin{enumerate}
                \item $f^{-1}(L_{i})$ consists of infinitely many  rays starting at points in the pre-image set $f^{-1}(v_{i})$.   If $x\in f^{-1}(v_{i}) \setminus \Omega_{f,1}$, there is only one ray emanating from $x$; this is a {\em non-critical ray}.  If $x\in f^{-1}(v_{i})\cap \Omega_{f, 1}$, there are $d_{x}=\deg_{x} f$  {\em critical rays} meeting at $x$.   Set $$W=\mathbb R^2 \setminus  ( \cup_{i=1}^m(L_i) \cup \{0\}).$$
                \item The collection of all critical rays meeting at points in $\Omega_{f,1}$ divides $f^{-1}(W)$ into $l=p+q-k=1+\sum_{c\in \Omega_{f,1}} (d_{c}-1)$  open unbounded connected components.
                \item Set $f^{-1}(0) =\{ a_{i}\}_{i=1}^{p-k}$  where the $a_i$ are distinct.   Each $a_i$ is contained in a distinct component of $f^{-1}(W)$;  label these components $W_{i,0}$, $i=1, \ldots p-k$.  Then the restriction   $f: W_{i,0} \setminus \{a_i\} \rightarrow W$ is an unbranched covering map  of degree $d_i=deg_{a_i}f$ where $d_i>1$ if $a_i \in \Omega_{f,0}$ and $d_i=1$ otherwise.
                  \item Label the remaining $q$ connected
                  components of $f^{-1}(W)$ by $W_{j,1}$, $j=1, \ldots, q$.
                  Then the restriction $f: {W_{j,1}} \rightarrow W$
                  is a universal covering map.
          \end{enumerate}
\end{itemize}
\end{definition}



In section 3 of~\cite{ Z},  Zakeri proves that the $(p,q)$-exponential maps are topological exponential maps of type $(p,q)$.
The converse is also true.

\vspace*{5pt}
\begin{theorem}~\label{topexp}
Suppose $f\in {\TE}_{p,q}$ is analytic. Then $f=Pe^{Q}$ for two polynomials $P$ and $Q$ of degrees $p$ and $q$. That is, an analytic topological exponential map of type $(p,q)$ is a $(p,q)$-exponential map.
\end{theorem}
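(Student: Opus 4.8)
The plan is to convert the covering data of Definition~\ref{topexpdef} into analytic information and then invoke classical function theory. First dispose of the case $q=0$: then $f$ is a degree-$p$ branched self-covering of the sphere with $f^{-1}(\infty)=\{\infty\}$, hence holomorphic on $\CC$ with a pole at $\infty$, i.e.\ a polynomial of degree $p$, and one may take $Q\equiv 0$. So assume $q\ge 1$; then $f\colon\CC\to\CC$ (or $f\colon\CC\to\CC^{*}$ when $p=0$) is entire. By Definition~\ref{topexpdef}(i) the zero set $f^{-1}(0)$ is finite with $\sum_{a\in f^{-1}(0)}\mathrm{ord}_{a}f=p$, the order of vanishing at $a$ equalling the local degree $\deg_{a}f$. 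Setting $P(z)=\prod_{a}(z-a)^{\deg_{a}f}$, a polynomial of degree $p$ (take $P\equiv 1$ when $p=0$), the quotient $h=f/P$ is entire and nowhere zero, so $h=e^{Q}$ for some entire $Q$. It then remains to prove that $Q$ is a polynomial of degree $q$.

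The crux is that $Q$ is a polynomial, and here I would exploit the rest of the covering data. By Definition~\ref{topexpdef} the map $f$ has finitely many critical points, a single finite asymptotic value (namely $0$), and precisely $q$ logarithmic tracts $W_{1,1},\dots,W_{q,1}$ over it, on each of which $f$ restricts to a universal covering onto the non-simply-connected domain $W$; thus $f$ lies in the Speiser class and its associated line complex is finite. The plan is to conclude from this that $f$ has finite order. The cleanest route is to cite Nevanlinna's classification of entire functions with finitely many singular values and a finite line complex (see~\cite{Nev}), which yields finite order directly. Alternatively one argues by hand: since $f$ and $P$ are non-vanishing on the simply connected $W_{j,1}$, a single-valued branch of $Q=\log f-\log P$ is defined there, and the $q$ tracts over $0$ together with the $q$ complementary tracts over $\infty$ cut a neighbourhood of $\infty$ into $2q$ sectorial regions of opening $\sim\pi/q$; uniformizing each tract and estimating the resulting conformal maps gives $|Q(z)|=O(|z|^{q})$ off a thin exceptional set, whence, by Phragm\'en--Lindel\"of, $|Q(z)|=O(|z|^{q})$ on all of $\CC$ and $Q$ is a polynomial of degree $\le q$ by Cauchy's estimates. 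Either way $f$ is an entire function of finite order with finitely many zeros, so the Hadamard factorization theorem confirms $f=(\text{polynomial})\cdot e^{(\text{polynomial})}$.

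Finally I would fix the degrees. With $Q$ a polynomial, $f'=(P'+PQ')e^{Q}$, so the critical points of $f$ are exactly the zeros of $P'+PQ'$. If $\deg Q=0$ then $f$ is a polynomial, which is impossible when $q\ge 1$ since $f|_{W_{1,1}}\colon W_{1,1}\to W$ is a universal covering of a non-simply-connected domain, hence infinitely-many-to-one, whereas a polynomial is everywhere finitely-many-to-one. Hence $\deg Q\ge 1$ and $\deg(P'+PQ')=p+\deg Q-1$; counted with multiplicity this equals $p+q-1$ by Definition~\ref{topexpdef}(ii), so $\deg Q=q$ and $f=Pe^{Q}$ with $\deg P=p$ and $\deg Q=q$, as claimed. (For $p=0$ the same count reads $\deg Q-1=q-1$.) I expect essentially all the work to sit in the middle step---extracting finiteness of the order of $f$ from its topological covering type; once that is in hand, the factorization and the degree bookkeeping are routine.
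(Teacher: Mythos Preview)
Your overall architecture matches the paper's: factor $f=Pe^{g}$ with $P$ a degree-$p$ polynomial and $g$ entire, compute $f'=(P'+Pg')e^{g}$, and read off $\deg g=q$ from the critical-point count. The paper does exactly this in two lines, passing directly from ``$f'$ has $p+q-1$ zeros'' to ``$P'+Pg'$ is a polynomial of degree $p+q-1$.'' You are right to treat that passage as the crux rather than a triviality: an entire function with finitely many zeros need not be a polynomial, and indeed $f(z)=e^{e^{z}}$ has the same zero and critical-point counts as a putative map in ${\TE}_{0,1}$ (none of either) while $g(z)=e^{z}$ is transcendental. What excludes such examples is precisely the covering data---$e^{e^{z}}$ has infinitely many tracts over $0$ and is not a universal cover of $\CC^{*}$---and your plan to extract finite order from the finitely many tracts and critical points (via Nevanlinna's classification in~\cite{Nev}, or a direct sectorial Phragm\'en--Lindel\"of estimate) is the honest way to justify the step the paper leaves implicit. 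After that, your degree bookkeeping coincides with the paper's final line. So: same skeleton, but you supply the load-bearing middle argument; citing~\cite{Nev} is the cleanest finish, while your Option~B is workable but would require care in the conformal estimates on the $2q$ tracts.
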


\begin{proof}
If $q=0$, then $f$ is a polynomial $P$ of degree $p$.

If $q\geq 1$, then $f$ is an entire function with $p$ roots, counted with multiplicity.
Every  such function can be expressed as
$$
f (z)= P(z) e^{g(z)}
$$
where $P$ is a polynomial of degree $p$ and $g$ is some entire function (see~\cite[Section 2.3]{Al}).

Consider  $$f'(z) = (P(z)g'(z)+P'(z))e^{g(z)}.$$
It is also an entire function,  and by assumption it has $p+q-1$ roots so that $Pg'+P'$ is a polynomial of degree $p+q-1$.  It follows that $g'$ is a polynomial of degree $q-1$ and $g=Q$ is a polynomial of degree $q$.
\end{proof}

Note that if $f \in  {\TE}_{p,q}$, $ q \neq 0$,  the origin plays a special role:  it is the only point with no or finitely many pre-images.  The conjugate of $f$ by $z \mapsto az$,  $ a \in {\mathbb C}^*$, is also in  ${\mathcal TE}_{p,q}$;  conjugate maps are conformally equivalent.

For $f\in {\TE}_{p,q}$, we define the {\em post-singular set} as follows:
\begin{itemize}
\item[i)] When $q=0$,  $E$ is a polynomial and, as mentioned in the introduction,  is treated elsewhere.   We therefore always assume $q \geq 1$.
\item[ii)] When $q=1$ and $p=0$, the {\em post-singular set} is
$$
P_{f} =\overline{\cup_{n\geq 0} f^{n}(0)}\cup\{\infty\}.
$$
We normalize so that $f(0)=1\in P_{f}$.
\item[iii)] When $q\geq 1$ and $p\geq 1$, the set of branch points is
$$\Omega_{f} =\{ c\in {\mathbb R}^{2}\;|\; \deg_{c} f \geq 2\}$$ and the {\em post-singular set} is
$$
P_{f} = \overline{\cup_{n\geq 0} f^{n}({\mathcal V}\cup \{ 0 \})} \cup \{\infty\}.
$$
If $q>1$ or if $q=1$ and $f(0) \neq 0$, we normalize so that $f(0)=1\in P_{f}$.
If $f(0)=0$, then, by the assumption $q \geq 1$, there is  a branch point $c_{k+1}\not=0$ such that $v_{1}=f(c_{k+1})\not= 0$.
We normalize so that $v_{1}=1$.
\end{itemize}
To avoid trivial cases we assume that $\#(P_{f}) \geq 4$.

It is clear that, in any case, $P_{f}$ is forward invariant, that is,
$$
f(P_{f}\setminus \{\infty\})\cup \{\infty\} \subseteq P_{f}
$$
or equivalently,
$$
f^{-1} (P_{f} \setminus \{\infty\})\cup \{\infty\} \supset P_{f}.
$$
Note that since we assume $q\geq 1$, $f^{-1}(P_{f}\setminus \{\infty\}) \setminus (P_{f}\setminus \{\infty\})$
contains infinitely many points.

\begin{definition}
We call $f\in {\TE}_{p,q}$  {\em post-singularly finite} if $\#(P_{f})<\infty$.
\end{definition}

\section{Combinatorial Equivalence}
\label{sec:combequiv}

\begin{definition}
Suppose $f, g$ are two post-singularly  finite maps in ${\mathcal TE}_{p,q}$. We say that they are {\em combinatorially equivalent}  if  they are topologically equivalent so that there are
homeomorphisms $\phi$ and $\psi$ of the sphere $S^{2}={\mathbb R}^{2}\cup\{\infty\}$ fixing $0$ and $\infty$ such that $\phi\circ f=g\circ \psi$ on ${\mathbb R}^{2}$ and if they satisfy the additional condition,  $\phi^{-1}\circ \psi$ is isotopic to the identity of $S^{2}$ rel $P_{f}$.
\end{definition}
The commutative diagram for the above definition is
\begin{equation*}
\xymatrix{\mathbb{R}^2 \ar[d]^f\ar[r]^{\psi} & \mathbb{R}^2\ar[d]^{g}\\
\mathbb{R}^2\ar[r]^\phi & \mathbb{R}^2}
\end{equation*}
The isotopy condition says that $P_{g}= \phi(P_{f})$.

Consider
${\mathbb R}^{2}\cup \{\infty\}$ equipped with the standard conformal structure as the Riemann sphere. Then $f\in {\mathcal T}E_{p,q}$ is a map from $\hat{\mathbb C}$ into itself.  We say $f\in {\TE}_{p,q}$ is {\em locally $K$-quasiconformal} for some $K>1$ if for any $z\in  \hat{\mathbb{C}} \setminus  \Omega_{f} \cup \{0\}$, there is a neighborhood $U$ of $z$ such that $f: U\to f(U)$ is $K$-quasiconformal.  Since we are working with isotopies rel a finite set, the following lemma is standard.

\begin{lemma}
Any post-singularly finite $f\in {\TE}_{p,q}$  is combinatorially equivalent to some locally $K$-quasiconformal map $g\in {\TE}_{p,q}$.
\end{lemma}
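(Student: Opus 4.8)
\medskip
\noindent\emph{Plan of proof.} The idea is to build $g$ out of holomorphic local models glued along the one--dimensional ray skeleton of $f$ by controlled homeomorphisms, carrying out the construction so that $g$ stays in the combinatorial class of $f$. The case $q=0$ of topological polynomials is the classical rational--map statement, so assume $q\geq 1$.

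First I would fix the combinatorial data of $f$ to be preserved: the finite set $P_{f}$ with the restriction $f|_{P_{f}}$, the branch points $\Omega_{f}$ with their local degrees, the fibre $f^{-1}(0)$, and, following the definitions of \S\ref{sec:Tpq}, a choice of rays $L_{i}$, their full preimage $f^{-1}(\bigcup_{i}L_{i})$ consisting of the critical and non--critical rays, and the sheets $W_{i,0}$, $W_{j,1}$. I would then produce $g$ as a conjugate $\phi\circ f\circ\psi^{-1}$, with $\phi,\psi$ homeomorphisms of $S^{2}$ fixing $0$ and $\infty$ and $\phi^{-1}\circ\psi$ isotopic to the identity rel $P_{f}$, chosen so that $g$ has the following local forms: in holomorphic charts centred at each branch point $c$ and at its image, $g$ is the power map $w\mapsto w^{d_{c}}$, and similarly $g$ is a power map near each point of $f^{-1}(0)$ onto a fixed round neighbourhood of $0$, of degree $1$ at the unbranched preimages; on each sheet $W_{j,1}$, away from a thin collar of the rays bounding it, $g$ coincides in suitable charts with the holomorphic universal covering onto the target region $W=\RR^{2}\setminus(\bigcup_{i}L_{i}\cup\{0\})$ --- the holomorphic version of the topological model $z\mapsto e^{z}$ of \S\ref{sec:Tpq} --- and likewise $g$ equals the standard finite holomorphic covering on each $W_{i,0}\setminus\{a_{i}\}$; and in a thin collar about each ray of $f^{-1}(\bigcup_{i}L_{i})$, $g$ is a homeomorphism interpolating between the holomorphic models on the two adjacent sheets, chosen with uniformly bounded quasiconformal dilatation.

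The straightening near $\Omega_{f}\cup f^{-1}(0)$ uses only the uniqueness, up to isotopy rel boundary, of the standard branched covering of a disk, and the modifications near $\infty$ and along the rays meet $P_{f}$ only at $\infty$, where they fix it without local twisting; hence all the conjugating homeomorphisms may be taken isotopic to the identity rel $P_{f}$, so $g$ is combinatorially equivalent to $f$. Checking the conditions of Definition~\ref{topexpdef} (and their analogues when $p=0$) shows $g\in\TE_{p,q}$, and an affine conjugation makes $g$ fix $0$ and $1$ as well. The resulting $g$ is locally $K$--quasiconformal: at every point outside $\Omega_{g}\cup\{0\}$ and away from $\infty$ it is either holomorphic, hence locally $1$--quasiconformal, or it lies in a ray collar, where it is $K$--quasiconformal for the fixed collar constant $K>1$.

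The delicate point --- and the place where the argument goes beyond the classical rational case --- is the construction of the collar interpolations out to infinity: the rays of $f^{-1}(\bigcup_{i}L_{i})$ run to the essential singularity of $g$ along the Julia directions and the sheets carry infinite--degree coverings, so one must verify that the interpolating homeomorphism across such a ray can be chosen with dilatation bounded independently of position along the ray. This uses the description of the $2q$ asymptotic tracts and Julia rays in \S\ref{sec:epq}, and the estimate parallels the corresponding one for topological exponential maps in~\cite{HSS}; the remaining bookkeeping --- lifting the isotopy rel $P_{f}$ through the covering structure --- is the classical argument of Thurston~\cite{T,DH}.
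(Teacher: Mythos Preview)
Your approach is genuinely different from the paper's, and considerably more laborious. The paper does not build $g$ by gluing local holomorphic models along the ray skeleton. Instead it uses the pull-back complex structure in one stroke: on $X=\CC\setminus\Omega_{f}$ one takes the atlas whose charts are local inverses of $f$, so that the transition maps are identities and $(X,\alpha)$ is a Riemann surface; by uniformization there is a homeomorphism $h:\CC\to\CC$ conformal from $(X,\alpha)$ to $\CC\setminus A$ with $A$ finite, and then $R=f\circ h^{-1}$ is \emph{globally} holomorphic. Since $P_{f}$ is finite, one replaces $h$ by a $K$--quasiconformal map $k$ isotopic to $h$ rel $P_{f}$ (a standard finite-set fact), and $g=R\circ k$ is the desired locally $K$--quasiconformal representative. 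No collars, no interpolation, no estimate at infinity.

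Your construction can be made to work, but the point you yourself flag as delicate is a real gap, not routine bookkeeping. The interpolating collars run along rays out to the essential singularity, and the two holomorphic sheet models on either side of a critical ray are determined only up to automorphisms of the model domain; you must show these can be chosen so that the boundary mismatches stay uniformly bounded along the entire infinite ray, for every critical ray simultaneously, and compatibly with the power-map normal form you imposed near the branch point at the finite end. The reference to~\cite{HSS} does not settle this: that paper treats $\TE_{0,1}$, where there are no branch points and hence no critical rays to interpolate across at all. For general $(p,q)$ you would need to prove that adjacent sheet uniformizations differ asymptotically by a deck transformation plus a bounded error --- plausible, but not supplied. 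The paper's uniformization argument sidesteps the issue entirely: the global holomorphic structure is produced first, and the only quasiconformal map needed is the approximation $k\sim h$ rel the finite set $P_{f}$, where a bound on $K$ is immediate.
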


\begin{proof}
Recall that $\Omega_{f}$ is the set of branch points of $f$ in $\CC$ and $0$ is only asymptotic value in $\CC$.
Consider the space $X=\CC \setminus \Omega_{f}$. For every $p\in X$, let $U_{p}$ be a small neighborhood about $p$ such that $\phi_{p}=f|_{U}: U\to f(U)\subset \CC$ is injective. Then $\alpha=\{ (U_{p}, \phi_{p})\}_{p\in X}$ defines an atlas on $X$ with charts $(U_{p}, \phi_{p})$. If $U_{p}\cap U_{q}\not=\emptyset$, then $\phi_{p}\circ \phi_{q}^{-1} (z)=z: \phi_{q}(U_{p}\cap U_{q}) \to \phi_{p} (U_{p}\cap U_{q})$. Thus all transition maps are conformal ($1-1$ and analytic) and the atlas $\alpha$ defines a Riemann surface structure on $X$ which we again denote by $\alpha$.  Denote the Riemann surface by  $S=(X, \alpha)$.  From the uniformization theorem, $S$ is conformally equivalent to the Riemann surface $\CC\setminus A$ with the standard complex structure induced by  $\CC$,  where $A$ consists of $n=\#(\Omega_{f})+1$ points . The homeomorphism $h: \CC\to \CC$ with $h(0)=0$ and   $h: S=(X, \alpha)\to \CC\setminus A$ is conformal so that
$R=f\circ h^{-1}: \CC\to \CC$ is holomorphic with critical points at $h(\Omega_{f})$ and one asymptotic value at $0$. Since the set
$P_{f}$ is finite, following the standard procedure in quasiconformal mapping theory,  there is a $K$-quasiconfornal homeomorphism $k: \hat{\CC}\to \hat{\CC}$ such that $h$ is isotopic to $k$ rel $P_{f}$. The map  $g= R\circ k$ is a locally $K$-quasiconformal map in ${\TE}_{p,q}$ and combinatorially equivalent to $f$. This completes the proof of the lemma.
 \end{proof}

Thus without loss of generality, in the rest of the paper, we will assume that any post-singularly finite $f \in {\TE}_{p,q}$ is locally $K$-quasiconformal for some $K\ge 1$.

\section{Teichm\"uller Space $T_{f}$}
\label{sec:teich sp}

Recall that we denote $\mathbb{R}^2 \cup \{\infty\}$ equipped with the standard conformal structure by $\widehat{\mathbb{C}}$.  Let ${\mathcal M}=\{ \mu \in L^{\infty} (\widehat{\mathbb C})\;|\; \|\mu\|_{\infty}<1\}$ be the unit ball in the space of all measurable functions on the Riemann sphere. Each element $\mu\in {\mathcal M}$ is called a Beltrami coefficient.
For each Beltrami coefficient $\mu$, the Beltrami equation
$$
w_{\overline{z}}=\mu w_{z}
$$
has a unique quasiconformal  solution $w^{\mu}$ which maps $\hat{\mathbb C}$  to itself fixing $0,1, \infty$.
Moreover, $w^{\mu}$ depends holomorphically  on $\mu$.

Let $f$ be a post-singularly finite map in ${\mathcal TE}_{p,q}$  with post-singular set $P_f$.    The Teichm\"uller space $T(\hat{\mathbb C}, P_f)$ is defined as follows.  Given Beltrami differentials   $\mu, \nu \in {\mathcal M}$  we say that      $\mu$ and $\nu$ are equivalent in $\mathcal M$,  and denote this by  $\mu\sim \nu$, if $(w^{\nu})^{-1} \circ w^{\mu}$ is isotopic to the identity map of $\widehat{\mathbb C}$ rel $P_f$. The equivalence  class of $\mu$ under $\sim$ is denoted by $[\mu]$.
We set
$$
T_f=T(\hat{\mathbb C}, P_f)= \mathcal M/ \sim.
$$

It is easy to see that $T_{f}$ is a finite-dimensional complex manifold and is equivalent to the classical Teichm\"uller space $Teich(\hat{\mathbb C}\setminus P_{f})$ of Riemann surfaces with  basepoint $\hat{\mathbb C}\setminus P_{f}$. Therefore, the Teichm\"uller distance $d_{T}$ and the Kobayashi distance $d_{K}$ on $T_{f}$ coincide.

\section{Induced Holomorphic Map $\sigma_{f}$}
\label{sec:Thurston map}

For any post-singularly finite $f$ in ${\mathcal TE}_{p,q}$, there is an induced  map $\sigma= \sigma_{f}$ from $T_{f}$ into itself given by
$$
\sigma([\mu]) =[f^{*}\mu],
$$
where
\begin{equation}~\label{pullbackformula}
f^{*}\mu(z) = \frac{\mu_f(z) + \mu_f((f(z))
\theta(z)}{1 + \overline{\mu_f (z)} \mu_f(f(z)) \theta(z)}, \quad \theta(z) =\frac{\bar{f}_{z}}{f_{z}}.
\end{equation}
It is a holomorphic map so that

\vspace*{5pt}
\begin{lemma}~\label{contractive}
For any two points $\tau$ and $\tilde\tau$ in $T_{f}$,
$$
d_{T}(\sigma(\tau), \sigma(\tilde\tau))\leq d_{T}(\tau, \tilde\tau).
$$
\end{lemma}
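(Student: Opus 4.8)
The plan is to obtain this as an instance of the general principle that a holomorphic self-map of a complex manifold does not increase the Kobayashi pseudodistance, combined with the identification $d_{K}=d_{T}$ on $T_{f}$ recorded in Section~\ref{sec:teich sp}. Concretely the argument has three steps: (1) check that $\sigma_{f}$ is a well-defined holomorphic map of $T_{f}$ into itself; (2) invoke the Kobayashi distance-decreasing property to get $d_{K}(\sigma_{f}(\tau),\sigma_{f}(\tilde\tau))\le d_{K}(\tau,\tilde\tau)$; (3) replace $d_{K}$ by $d_{T}$ throughout via Royden's theorem.

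For step (1), well-definedness means $\mu\sim\nu\Rightarrow f^{*}\mu\sim f^{*}\nu$. Since $f$ is locally $K$-quasiconformal we have $|\mu_{f}|\le (K-1)/(K+1)<1$ a.e., and the composition formula (\ref{pullbackformula}) then gives $\|f^{*}\mu\|_{\infty}<1$, so $f^{*}\mu\in \mathcal M$. To descend to $T_{f}$ one runs the standard Thurston lifting argument (cf. \cite{Ji,DH}): an isotopy rel $P_{f}$ carrying the identity to $(w^{\nu})^{-1}\circ w^{\mu}$ lifts through $f$ to an isotopy carrying the identity to $(w^{f^{*}\nu})^{-1}\circ w^{f^{*}\mu}$. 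This lift exists because $f$ is a covering map away from the finite set $\Omega_{f}\cup f^{-1}(0)$, an isotopy of the target rel a point lifts through a local model $z\mapsto z^{d}$ rel the corresponding preimage, the set $\mathcal V\cup\{0\}$ over which such branching occurs lies in $P_{f}$, and $P_{f}$ is forward invariant; hence the lifted isotopy --- automatically rel the (infinite) set $f^{-1}(P_{f}\setminus\{\infty\})\cup\{\infty\}$ --- is a fortiori rel $P_{f}$ and fixes $0$ and $\infty$. Holomorphy of $\sigma_{f}$ then follows from the holomorphic dependence of the normalized solution $w^{\mu}$ of the Beltrami equation on $\mu$ (Ahlfors--Bers) together with the algebraic form of (\ref{pullbackformula}); this is the assertion made just before the lemma.

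Step (2) is Kobayashi's general contraction property: for any holomorphic $F:X\to Y$ and $x,x'\in X$ one has $d_{K}^{Y}(F(x),F(x'))\le d_{K}^{X}(x,x')$, itself immediate from the Schwarz--Pick lemma applied to holomorphic disks in $X$ post-composed with $F$. Applying this to $F=\sigma_{f}:T_{f}\to T_{f}$ and then using $d_{K}=d_{T}$ on $T_{f}$ --- valid because $T_{f}$ is biholomorphic to the classical Teichm\"uller space $\mathrm{Teich}(\widehat{\mathbb C}\setminus P_{f})$, for which this is Royden's theorem --- yields $d_{T}(\sigma_{f}(\tau),\sigma_{f}(\tilde\tau))\le d_{T}(\tau,\tilde\tau)$, as claimed. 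I do not anticipate a genuine obstacle: the only point worth stressing is that the transcendental, infinite-degree nature of $f$ is harmless here, since everything takes place on the finite-dimensional manifold $T_{f}$ and uses only the finiteness and forward-invariance of $P_{f}$; the difficulties peculiar to transcendental maps surface later, in establishing the compactness needed for convergence of the iteration.
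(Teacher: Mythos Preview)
Your proposal is correct and follows exactly the approach the paper takes: the paper simply records that $\sigma$ is holomorphic and that $d_{T}=d_{K}$ on $T_{f}$ (Section~\ref{sec:teich sp}), leaving the Kobayashi contraction implicit, and you have merely unpacked those two observations with the requisite care about well-definedness. The paper also notes in Section~\ref{sec:suff} a second, infinitesimal proof via $\|q\|\le\|\tilde q\|$ for the pushforward of quadratic differentials, but your argument matches the primary one.
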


 The next lemma follows directly from the definitions.
\vspace*{5pt}
\begin{lemma}
A  post-singularly finite $f$ in ${\TE}_{p, q}$ is combinatorially equivalent to a $(p,q)$-exponential map $E=Pe^{Q}$ iff $\sigma$ has a fixed point in $T_{f}$.
\end{lemma}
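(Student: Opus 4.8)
The plan is to unwind both sides of the claimed equivalence through the definitions already set up in the excerpt. For one direction, suppose $f\in\TE_{p,q}$ is post-singularly finite and combinatorially equivalent to a $(p,q)$-exponential map $E=Pe^{Q}$. By the definition of combinatorial equivalence there are homeomorphisms $\phi,\psi$ of $S^2$ fixing $0$ and $\infty$ with $\phi\circ f=E\circ\psi$ and $\phi^{-1}\circ\psi$ isotopic to the identity rel $P_f$. The idea is to pull back the standard conformal structure on $\widehat{\CC}$ by $\phi$ to obtain a Beltrami coefficient $\mu=\phi^*(0)$, i.e. the complex dilatation of $\phi$; then $[\mu]\in T_f$. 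One checks that $\sigma_f([\mu])=[\mu]$: this is exactly the statement that pulling back $\mu$ by $f$ yields something equivalent to $\mu$, which follows because $E$ is holomorphic (so $E^*$ of the standard structure is the standard structure) together with the commuting diagram $\phi\circ f = E\circ\psi$ and the fact that $\psi$ and $\phi$ are isotopic rel $P_f$. Concretely, $w^{\mu}$ can be taken to be $\phi$ (after the normalization fixing $0,1,\infty$, using that an affine stretch conjugacy is available), and then $w^{\mu}\circ f\circ (w^{\mu})^{-1}$ is conjugate via the isotopic-to-identity map $\phi\circ\psi^{-1}$ to the holomorphic map $E$, which is what it means for $[f^*\mu]=[\mu]$.

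For the converse, suppose $\sigma_f$ has a fixed point $[\mu]\in T_f$. Let $\phi=w^{\mu}$ be the normalized solution of the Beltrami equation with coefficient $\mu$. Since $\sigma_f([\mu])=[\mu]$, the coefficient $f^*\mu$ is equivalent to $\mu$, so there is a homeomorphism $\psi$ with complex dilatation equal to $f^*\mu$ and with $(w^{\mu})^{-1}\circ w^{f^*\mu}$ isotopic to the identity rel $P_f$; equivalently, choosing $\psi=w^{f^*\mu}$, the map $\phi^{-1}\circ\psi$ is isotopic to the identity rel $P_f$. The composition $E:=\phi\circ f\circ\psi^{-1}$ is then, by the chain rule for Beltrami coefficients (formula~\eqref{pullbackformula}), a map whose complex dilatation vanishes a.e., hence $E$ is holomorphic on $\CC$ (away from the branch and asymptotic-value set, and then everywhere by removability). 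By construction $E$ is a branched covering with the same combinatorial data as $f$: it has a single finite asymptotic value at $0$, the right number of preimages of $0$, and the right branch data, so $E\in\TE_{p,q}$ is analytic, and by Theorem~\ref{topexp} it has the form $Pe^Q$ with $\deg P=p$, $\deg Q=q$. Moreover $\phi\circ f=E\circ\psi$ with $\phi^{-1}\circ\psi\simeq\mathrm{id}$ rel $P_f$, so $f$ and $E$ are combinatorially equivalent.

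The two pieces I expect to require the most care are, first, the precise bookkeeping of the normalization: the Teichm\"uller space $T_f$ is built using solutions $w^\mu$ that fix $0,1,\infty$, whereas combinatorial equivalence only requires $\phi,\psi$ to fix $0,\infty$, so one must absorb the remaining affine-stretch ambiguity using the earlier remark that conjugating $f\in\TE_{p,q}$ by $z\mapsto az$ stays in $\TE_{p,q}$ and is conformally equivalent. Second, one must justify that the holomorphic map $E$ obtained really lies in $\TE_{p,q}$ rather than being some other entire map --- i.e. that conjugating $f$ by homeomorphisms preserves membership in $\TE_{p,q}$ (it preserves the covering structure in Definitions above, since $\phi,\psi$ are homeomorphisms fixing $0$ and $\infty$), and that $E$ is genuinely holomorphic at the asymptotic value and at the branch points, which is where one invokes the local $K$-quasiconformality normalization together with the standard removability of quasiconformal curves. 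Once these points are handled the lemma follows directly from the definitions, as the paper indicates; the substantive content is already packaged in Theorem~\ref{topexp} and in the pullback formula~\eqref{pullbackformula}.
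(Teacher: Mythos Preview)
Your proposal is correct and is exactly what the paper intends: the paper gives no proof at all beyond the remark that the lemma ``follows directly from the definitions,'' and your argument is a faithful unpacking of that claim, invoking Theorem~\ref{topexp} and the pullback formula~(\ref{pullbackformula}) in the appropriate places. The two bookkeeping issues you flag---absorbing the affine-stretch ambiguity in the normalization, and verifying that the resulting holomorphic $E$ lies in $\TE_{p,q}$---are real but routine, and the paper's standing assumption that $f$ is locally $K$-quasiconformal handles the removability concern.
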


\section{Bounded Geometry}
\label{sec:bounded geometry}

For any $\tau_{0}\in T_{f}$, let $\tau_{n}=\sigma^{n}(\tau_{0})$, $n\geq 1$. The iteration sequence $\tau_{n}=[\mu_{n}]$ determines a sequence of finite subsets
$$
P_{f,n} = w^{\mu_{n}}(P_{f}), \quad n=0, 1, 2, \cdots.
$$
Since all $w^{\mu_{n}}$ fix $0, 1, \infty$, it follows that $0, 1, \infty\in P_{f,n}$.

\begin{definition}[Spherical Version]
We say $f$ has {\em bounded geometry} if there is a constant $b>0$ and a point $\tau_{0}\in T_{f}$ such that
$$
d_{sp} (p_{n},q_{n}) \geq b
$$
for $p_{n}, q_{n}\in  P_{f,n}$ and $n\geq 0$. Here
$$
d_{sp}(z,z')= \frac{|z-z'|}{\sqrt{1+|z|^{2}}\sqrt{1+|z'|^{2}}}
$$
is the spherical distance on $\hat{\mathbb C}$.
\end{definition}

Note that $d_{sp}(z, \infty) = \frac{|z|}{\sqrt{1+|z|^2}}$.
Away from infinity the spherical metric and Euclidean metric are equivalent.
Precisely, for any bounded  $S \subset \mathbb C$,
there is a constant $C>0$ which depends only on $S$ such that
$$
C^{-1} d_{sp}(x,y) \leq |x-y|\leq Cd_{sp}(x,y)\quad  \forall  x,y \in S.
$$

Consider the hyperbolic Riemann surface $R=\hat{\mathbb C}\setminus P_{f}$
equipped with the standard complex structure as the basepoint $\tau_{0}=[0]\in T_{f}$.
A point $\tau$ in $T_{f}$ defines another complex structure $\tau$ on $R$.
Denote by $R_{\tau}$  the hyperbolic Riemann surface  $R$ equipped with the complex structure $\tau$.

A simple closed curve $\gamma\subset R$ is called {\it non-peripheral} if each component of $\hat{\mathbb C}\setminus \gamma$ contains at least two points
of $P_{f}$. Let $\gamma$ be a non-peripheral simple closed curve in $R$.
For any $\tau=[\mu]\in T_{f}$, let $l_{\tau}(\gamma)$ be the hyperbolic length of the unique closed geodesic homotopic to $\gamma$ in $R_{\tau}$.
The bounded geometry property can be stated in terms of  hyperbolic geometry as follows.

\begin{definition}[Hyperbolic version]
We say $f$ has {\em bounded geometry}  if there is a constant $a>0$ and a point $\tau_{0}\in T_{f}$ such that
 $l_{\tau_{n}}(\gamma)\geq a$ for all $n\geq 0$ and all non-peripheral simple closed curves $\gamma$ in $R$.
\end{definition}

The above definitions of bounded geometry are equivalent because of the following lemma and the fact that we have normalized so that $0,1,\infty$ always belong to $P_f$.

\vspace*{5pt}
\begin{lemma}~\label{sg} Consider the hyperbolic Riemann surface
$\hat{\mathbb C}\setminus X$, where $X$ is a finite subset of $\hat\CC$ such that $0, 1, \infty \in X$, equipped with the standard complex structure.
Let $a>0$ be a constant. If every simple closed geodesic in $\hat{\mathbb C}
\setminus X$ has hyperbolic length greater than $a$, then the
spherical distance between any two distinct points in $X$ is bounded below by a bound $b>0$ which depends only on $a$ and $m=\#(X)$.
\end{lemma}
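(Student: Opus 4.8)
The plan is to argue by contradiction via a compactness/degeneration argument, combined with the standard estimate relating the modulus of an essential annulus to the hyperbolic length of its core geodesic. Suppose the lemma fails for some fixed $m\geq 3$ and $a>0$. Then there is a sequence of finite sets $X_k\subset\hat{\mathbb C}$ with $\#(X_k)=m$ and $0,1,\infty\in X_k$, such that every simple closed geodesic of $\hat{\mathbb C}\setminus X_k$ has hyperbolic length $>a$, yet $\delta_k:=\min\{d_{sp}(x,y)\,:\,x,y\in X_k,\ x\neq y\}\to 0$. (Producing this sequence is exactly what it means for no $b=b(a,m)$ to work, so establishing the desired contradiction yields the claimed uniformity of $b$.) Enumerate $X_k=\{p_{1,k},\dots,p_{m,k}\}$ with the labels for $0,1,\infty$ held fixed. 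Passing to a subsequence we may assume $p_{i,k}\to p_i^\infty\in\hat{\mathbb C}$ for each $i$. Partition the index set into clusters according to the common value of $p_i^\infty$. Since $0,1,\infty$ are three distinct limits there are at least three clusters, and since $\delta_k\to 0$ at least one cluster $C$ has $\#C\geq 2$.

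\textbf{A degenerating annulus.} Let $z^*$ be the common limit of $\{p_{i,k}\}_{i\in C}$. The remaining limits $p_j^\infty$ ($j\notin C$) all differ from $z^*$, so there is $\varepsilon>0$ with $d_{sp}(p_{j,k},z^*)>2\varepsilon$ for all $j\notin C$ and all large $k$, while $\varepsilon_k:=\max_{i\in C}d_{sp}(p_{i,k},z^*)\to 0$. For such $k$ the round spherical annulus $A_k=\{z:\varepsilon_k<d_{sp}(z,z^*)<2\varepsilon\}$ is contained in $\hat{\mathbb C}\setminus X_k$, and (passing to a conformal chart sending $z^*$ to $0$) it is conformally a round Euclidean annulus, so $\mathrm{mod}(A_k)\asymp\log(2\varepsilon/\varepsilon_k)\to\infty$. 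Its core curve $\gamma_k$ separates the $\geq 2$ points of $X_k$ indexed by $C$ from the $\geq 2$ points indexed by the (at least two) other nonempty clusters; hence $\gamma_k$ is a \emph{non-peripheral} simple closed curve in $\hat{\mathbb C}\setminus X_k$.

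\textbf{Short geodesic and contradiction.} I would then invoke the standard fact that if an essential annulus $A$ is embedded in a hyperbolic Riemann surface $R$, then the hyperbolic length in $R$ of the geodesic homotopic to the core of $A$ is at most $\pi/\mathrm{mod}(A)$: the inclusion $A\hookrightarrow R$ does not increase the hyperbolic metric (Schwarz--Pick), and the core geodesic of the model annulus $\{1<|w|<\rho\}$ has hyperbolic length exactly $\pi/\mathrm{mod}(A)$. Applying this with $R=\hat{\mathbb C}\setminus X_k$ and $A=A_k$, and noting that the geodesic representative of the non-peripheral simple closed curve $\gamma_k$ is again simple, we obtain a non-peripheral simple closed geodesic in $\hat{\mathbb C}\setminus X_k$ of hyperbolic length $\leq\pi/\mathrm{mod}(A_k)\to 0$. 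For $k$ large this is $<a$, contradicting the hypothesis. This forces the lemma, with a bound $b$ depending only on $a$ and $m$.

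\textbf{Remarks on the difficulty.} The only nontrivial ingredient is the modulus-versus-length estimate for the core geodesic; the rest is elementary plane topology together with a routine subsequence argument. The hypothesis that $0,1,\infty$ lie in $X$ is used precisely to guarantee that at least three limit points stay distinct, so that a small loop around a collapsing cluster is non-peripheral rather than peripheral. Finally, for $m=3$ the surface $\hat{\mathbb C}\setminus X$ is a thrice-punctured sphere with no non-peripheral simple closed curves, so the hypothesis is vacuous and the conclusion holds trivially with $b$ equal to the minimal pairwise spherical distance among $0,1,\infty$.
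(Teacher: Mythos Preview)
Your proof is correct and takes a genuinely different route from the paper's. The paper argues directly and constructively: ordering the finite points of $X$ by modulus, a pigeonhole argument over the concentric round annuli between consecutive radii produces an annulus $A_{i_0}$ with $\mathrm{mod}(A_{i_0})\geq \frac{\log M}{2\pi(m-3)}$, where $M=\max_{x\in X\setminus\{\infty\}}|x|$; comparing the extremal length of the core curve with the hyperbolic length via $\|\gamma\|\geq a^{2}/(2\pi(m-2))$ (using the Gauss--Bonnet area) yields an explicit upper bound $M\leq M_0(a,m)$. The paper then reruns this estimate after post-composing with the M\"obius isometries $z\mapsto 1/z$ and $z\mapsto z/(z-x_i)$ (which keep $0,1,\infty$ in the configuration) to get lower bounds on $|x_i|$ and $|x_i-x_j|$. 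Your argument instead proceeds by contradiction and compactness: a hypothetical sequence of collapsing configurations, clustered by limit point, yields an embedded annulus of modulus $\to\infty$ whose core is non-peripheral, and the Schwarz--Pick inequality $\ell\leq \pi/\mathrm{mod}(A)$ then forces a short geodesic. Your route is shorter and conceptually cleaner, and the modulus--length estimate you invoke is sharper than the extremal-length inequality the paper uses; the trade-off is that the paper's argument produces an explicit formula for $b$ in terms of $a$ and $m$, whereas your compactness proof is non-constructive.
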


\begin{proof} If
$m=3$ there are no non-peripheral simple closed curves so in the following argument we may assume that $m\geq
4$. Let $X = \{x_{1}, \cdots, x_{m-1},x_{m} = \infty \}$   and let
$|\cdot|$ denote the Euclidean metric on ${\mathbb C}$.

Suppose $0=|x_{1}| \le \cdots \le |x_{m-1}|$. Let $M = |x_{m-1}|$.
Then $|x_{2}| \le 1$,  and we have
$$
\prod_{2 \le i \le m-2} \frac{|x_{i+1}|}{|x_{i}|} =
\frac{|x_{m-1}|}{|x_{2}|} \ge M.
$$
Hence
$$
\max_{2 \le i \le m-2} \Big\{ \frac{|x_{i+1}|}{|x_{i}|}\Big\} \ge
M^{\frac{1}{m-3}}.
$$
Let
$$
A_{i} = \{z\in {\mathbb C} \quad \Big{|}\quad |x_{i}| < z <
|x_{i+1}|\}
$$
and let $\mod(A_{i}) = \frac{1}{2\pi} \log\frac{|x_{i+1}|}{ |{x_i}|}$ be its modulus. Then for some
integer $2\leq i_{0}\leq m-2$  it follows that
$$
\mod(A_{i_{0}}) \ge \frac{\log M}{2 \pi (m-3)}.
$$
Denote the extremal length of  the core curve $\gamma_{i_0}$ in $A_{i_{0}} \subset \hat{\mathbb C}\setminus X$
by $\|\gamma_{i_{0}}\| $.
By properties of extremal length,
$$
\|\gamma_{i_{0}}\|= \frac{1}{\mod(A_{i_{0}})} \le \frac{2
\pi (m-3)}{\log M}.
$$
Since extremal length is defined by taking a supremum over all metrics and the area of $\hat{\mathbb C} \setminus X$  is $ 2\pi(m-2)$ for every hyperbolic metric,
$$
\|\gamma_{i_{0}}\|  \geq \frac{ l_{\tau_n}^{2}(\gamma)}{2\pi(m-2)} \geq  \frac{a^{2}}{2\pi(m-2)}.
$$
Setting $a'=  \frac{a^{2}}{2\pi(m-2)}$, these inequalities imply
$$
M \le M_{0}=e^{\frac{2 \pi (m-3)}{a'}}.
$$
Thus the spherical distance between $\infty$ and any finite point in
$X$ has a positive lower bound $b$ which depends only on $a$ and $m$.

Next we show that the spherical distance between any two
finite points in $X$ has a positive lower bound depending only on
$a$ and $m$.  By the equivalence of the spherical and Euclidean metrics in a bounded set in the plane,
it suffices to prove that $|x-y|$ is greater than a constant $b$ for any two finite points in $X$.

First consider the map $\alpha (z) =1/z$  which is a hyperbolic isometry from $X$ to $\alpha(X)$.  It preserves the set  $\{0, 1,\infty\}$ so that $0,1, \infty \in \alpha (X)$.
For any $2\leq i\leq m-1$, the above argument   implies that   $1/|x_{i}| \leq M_{0}$  and hence $|x_{i}|\geq 1/M_{0}$.
Similarly, for any $x_{i}\in X$,  $2\leq i\leq m-1$,  consider the map $\beta(z) =z/ (z-x_{i})$.  It maps $\{0,\infty,x_i\}$ to $\{0,1,\infty \}$ so that $\beta(X)$ contains $\{0,1,\infty\}$ and it is also a hyperbolic isometry.   For any $2\leq i\leq m-1$, the above argument implies that $|x_{j}|/|x_{j}-x_{i}| \leq M_{0}$
which in turn  implies that $|x_{j}-x_{i}| \geq 1/M_{0}^{2}$ proving the lemma.
\end{proof}

\section{The Main Result--Necessity}
\label{sec:main result}

Our main result (Theorem~\ref{main2}) has two parts: necessity and sufficiency. The necessity is relatively easy and can be proved for the general case. 
We prove the following statement. 

\vspace*{5pt}
\begin{theorem}\label{necc}
A post-singularly finite topological exponential map $f\in {\mathcal TE}_{p,q}$ is combinatorially equivalent to a unique $(p,q)$-exponential map
$E=Pe^{Q}$ if and only if
$f$ has bounded geometry.
\end{theorem}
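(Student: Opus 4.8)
The plan is to treat the equivalence asymmetrically: the ``only if'' implication (necessity of bounded geometry) is essentially formal and is what I would prove here, while the ``if'' implication I would reduce, by Theorem~\ref{main1}, to the compactness statement established in the later sections. For necessity, suppose $f$ is combinatorially equivalent to a $(p,q)$-exponential map $E=Pe^{Q}$; note $E$ is post-singularly finite, since combinatorial equivalence preserves this. By the characterization lemma of \S\ref{sec:Thurston map}, the equivalence produces a fixed point $\tau_{0}=[\mu_{0}]$ of $\sigma_{f}$, with $E$ the associated holomorphic map and $P_{E}=w^{\mu_{0}}(P_{f})$; I would run the iteration scheme from precisely this $\tau_{0}$. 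Then $\sigma_{f}(\tau_{0})=\tau_{0}$ forces $\tau_{n}=\tau_{0}$ for all $n$, so every representative $\mu_{n}$ of $\tau_{n}$ satisfies $\mu_{n}\sim\mu_{0}$; hence $(w^{\mu_{0}})^{-1}\circ w^{\mu_{n}}$ is isotopic to the identity rel $P_{f}$ and in particular agrees with the identity on $P_{f}$. Thus $w^{\mu_{n}}$ and $w^{\mu_{0}}$ coincide on $P_{f}$, so $P_{f,n}=w^{\mu_{n}}(P_{f})=w^{\mu_{0}}(P_{f})=P_{E}$ for every $n$, a fixed finite set of distinct points, and bounded geometry holds with $b:=\min\{\,d_{sp}(x,y):x,y\in P_{E},\; x\neq y\,\}>0$. (In the hyperbolic formulation one instead notes that each $R_{\tau_{n}}$ equals the fixed hyperbolic surface $R_{\tau_{0}}$, so $l_{\tau_{n}}(\gamma)$ is independent of $n$ and bounded below by the systole.)

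For the converse, assume $f$ has bounded geometry. By Theorem~\ref{main1}, $f$ is then combinatorially equivalent to a $(p,q)$-exponential map as soon as one also knows the compactness condition of \S\ref{sec:suff}; so the entire content of this implication is the statement ``bounded geometry $\Rightarrow$ compactness.'' This is the substantive part of the whole project and is the subject of \S\ref{sec:proofmt}, where in this paper it is carried out only for the families in Theorem~\ref{main2}, using a topological constraint and the invariance of the winding numbers under $\sigma_{f}$; I therefore expect this implication to be the main obstacle, with the present section contributing only the easy necessity half.

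It remains to address uniqueness of the realizing map. If $f$ were combinatorially equivalent to two $(p,q)$-exponential maps $E_{1}$ and $E_{2}$, each would supply a fixed point of $\sigma_{f}$ in $T_{f}$ by the above, so it suffices to show $\sigma_{f}$ has at most one fixed point. Here I would invoke Thurston-type rigidity: $\sigma_{f}$ is a holomorphic self-map of the finite-dimensional, hence Kobayashi-hyperbolic, manifold $T_{f}$, and such a map with two distinct fixed points must restrict to an isometry of the Teichm\"uller geodesic joining them, which is incompatible with $\sigma_{f}$ being strictly distance-decreasing; the latter holds because $\#P_{f}\geq 4$, together with the branching and transcendental structure of $f$, excludes the exceptional Euclidean-orbifold case. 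Hence $\tau_{1}=\tau_{2}$, so $E_{1}$ and $E_{2}$ differ only by the affine stretch used in the normalization (a commuting M\"obius transformation must fix $0$ and $\infty$), i.e.\ they are the same map in the sense of \S\ref{sec:epq}. The only genuinely delicate point in this last step is verifying that $\sigma_{f}$ is not an isometry --- the $\TE_{p,q}$-analogue of Thurston's hyperbolic-orbifold hypothesis.
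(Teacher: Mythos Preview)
Your necessity argument is correct and matches the paper's: it too starts the iteration at a fixed point $\tau_{0}$ of $\sigma_{f}$, observes $\tau_{n}\equiv\tau_{0}$, and reads off bounded geometry from the fixed finite configuration $w^{\mu_{0}}(P_{f})$ (the paper phrases this via the systole of $R_{\tau_{0}}$, exactly your parenthetical version), while deferring sufficiency to Theorem~\ref{main1} plus the compactness of \S\ref{sec:proofmt} and uniqueness to Lemma~\ref{fixedpt}, just as you anticipate. One minor correction to your uniqueness paragraph: the strict contraction of $\sigma_{f}$ is obtained in Lemma~\ref{infstrongcon} directly from the \emph{infinite degree} of $E$ (equality $\|q\|=\|\tilde q\|$ would force infinitely many poles), so no analogue of Thurston's hyperbolic-orbifold exclusion is needed here and that step is less delicate than you suggest.
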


\begin{proof}[Proof of Necessity]
If $f$ is combinatorially equivalent to
$E=Pe^{Q}$, then $\sigma$ has a unique fixed point $\tau_{0}$ so that $\tau_{n}=\tau_{0}$ for all $n$.  The complex structure on $\hat{\mathbb C} \setminus P_f$  defined by $\tau_0$ induces  a hyperbolic metric on it.   The shortest geodesic in this metric gives a lower bound on the lengths  of all geodesics  so that  $f$ satisfies the hyperbolic definition of bounded geometry.
\end{proof}

\section{Sufficiency under Compactness}
\label{sec:suff}

The proof of sufficiency of our main theorem (Theorem~\ref{main2}) is more complicated and needs some preparatory material.
There are two parts:  one is a compactness argument and the other is a fixed point argument.
Once one has compactness, the proof of the fixed point argument  is quite standard (see~\cite{Ji}) and works for any $f\in {\mathcal TE}_{p,q}$. 
This is the content of Theorem 1  which we prove in this section. 

The normalized functions in ${\mathcal E}_{p,q}$ are determined by the $p+q+1$ coefficients of the polynomials $P$ and $Q$.
This identification defines an embedding into $\CC^{p+q+1}$ and hence a topology on ${\mathcal E}_{p,q}$.

Given $f\in\TE_{p,q}$ and given any $\tau_{0}=[\mu_{0}]\in T_f$, let $\tau_{n}=\sigma^{n} (\tau_{0}) =[\mu_{n}]$
be the sequence generated by $\sigma$. Let $w^{\mu_{n}}$ be the normalized quasiconformal map with Beltrami coefficient $\mu_{n}$.
Then
$$
E_{n} = w^{\mu_{n}}\circ f\circ (w^{\mu_{n+1}})^{-1}\in {\mathcal E}_{p, q}
$$
since it preserves
$\mu_0$ and hence is holomorphic.
This gives a sequence $\{E_{n}\}_{n=0}^{\infty}$ of maps in ${\mathcal E}_{p,q}$ and a sequence of subsets
$P_{f,n}=w^{\mu_{n}} (P_{f})$.
Note that $P_{f,n}$ is not, in general, the post-singular set $P_{E_{n}}$ of $E_{n}$.
If it were, we would have a fixed point of $\sigma$.

\medskip
\noindent {\bf The compactness condition.}  We say $f$ satisfies the compactness condition if the sequence $\{ E_n \}_{n=1}^{\infty}$ generated  in the Thurston iteration scheme is contained in a compact subset of $ \E_{p,q}$.

\medskip
From a conceptual point of view, the compactness condition is very natural and simple. From a technical point of view, however, it is not at all obvious.  We give a detailed proof showing how to get a fixed point from the condition of bounded geometry with compactness. 

Suppose $f$ is a post-singularly finite topological exponential map in ${\mathcal TE}_{p,q}$.
For any $\tau=[\mu]\in T_{f}$, let $T_{\tau}T_{f}$ and $T^{*}_{\tau}T_{f}$ be the tangent space and the cotangent space of $T_{f}$ at $\tau$ respectively.  Let $w^{\mu}$ be the corresponding normalized quasiconformal map fixing $0,1, \infty$.
Then $T^{*}_{\tau}T_{f}$ coincides with the space ${\mathcal Q}_{\mu}$ of integrable meromorphic quadratic differentials $q=\phi(z) dz^{2}$.  Integrablility means that  the norm of $q$,  defined by
$$
||q|| =\int_{\hat{\mathbb C}} |\phi(z)| dzd\overline{z}
$$
is finite.  This condition implies that  the  poles of $q$  must occur at points of $w^{\mu} (P_{f})$ and that these poles are simple.

Set $\tilde{\tau}=\sigma(\tau)=[\tilde{\mu}]$ and denote by $w^{\mu}$ and $w^{\tilde{\mu}}$   the corresponding  normalized quasiconformal maps. We have the following commutative
diagram:
$$
\begin{array}{ccc} \hat{\mathbb C}\setminus f^{-1}(P_{f}) & {\buildrel w^{\tilde{{\mu}}} \over
\longrightarrow} & \hat{\mathbb C}\setminus w^{\tilde{{\mu}}} (f^{-1}(P_{f}))\cr \downarrow f
&&\downarrow E_{\mu,\tilde{\mu}}\cr \hat{\mathbb C}\setminus P_{f}& {\buildrel w^{\mu}
\over \longrightarrow} & \hat{\mathbb C}\setminus w^{\mu}(P_{f}).
\end{array}
$$
Note that in the diagram, by abuse of notation, we write  $f^{-1}(P_{f})$ for  $f^{-1}(P_{f}\setminus \{\infty\})\cup\{\infty\}$.  Since by definition  $\tilde{\mu} = f^*\mu$, the map
  $E=E_{\mu,\tilde{\mu}}=  w^{\mu} \circ f \circ  (w^{\tilde{\mu}})^{-1}$ defined on ${\mathbb C}$
is analytic. By Theorem~\ref{topexp}, $E_{\mu,\tilde{\mu}}=P_{\tau, \tilde{\tau}}e^{Q_{\tau,\tilde{\tau}}}$ for a pair of polynomials $P=P_{\tau, \tilde{\tau}}$ and $Q=Q_{\tau,\tilde{\tau}}$ of respective degrees $p$ and $q$.

Let $\sigma_{*}: T_{\tau}T_{f}\to T_{\tilde{\tau}}T_{f}$ and  $\sigma^{*}: T_{\tilde{\tau}}^*T_{f}\to T_{\tau}^*T_{f}$ be the tangent and co-tangent map of $\sigma$, respectively.

Let $\beta(t)=[\mu(t)]$ be a smooth path in $T_{f}$ passing though $\tau$ at $t=0$ and let $\eta=\beta'(0)$ be the corresponding tangent vector at $\tau$.   Then the
pull-back $\tilde{\beta}(t)= [f^{*}\mu (t)]$ is a smooth path in $T_{f}$ passing though
$\tilde{\tau}$ at $t=0$ and $\tilde{\eta} = \sigma_{*} \eta=\tilde{\beta}'(t)$ is the corresponding tangent vector  at $\tilde{\tau}$.   We move these tangent vectors  to the origin in $T_f$ obtaining the vectors $\xi, \tilde\xi$ using the maps
$$
\eta = (w^{\mu})^{*}\xi \quad \hbox{and}\quad \tilde{\eta} =
(w^{\tilde{\mu}})^{*} \tilde{\xi}.
$$
This gives us the following commutative diagram:
$$
\begin{array}{ccc} \tilde{\eta}& {\buildrel (w^{\tilde{\mu}})^{*} \over
\longleftarrow} & \tilde{\xi}\cr \hbox{\hskip17pt}\uparrow f^{*} &
& \hbox{\hskip17pt}\uparrow E^{*} \cr \eta & {\buildrel (w^{\mu})^{*} \over \longleftarrow}& \xi\cr
\end{array}
$$
Now suppose $\tilde{q}$ is a co-tangent vector in
$T^{*}_{\tilde{\tau}}$ and let
$q=\sigma^{*} \tilde{q}$ be the corresponding co-tangent vector in
$T^{*}_{\tau}$. Then
$\tilde{q}=\tilde{\phi} (w) dw^{2}$ is an integrable quadratic differential on $\hat{\mathbb C}$ that can have at worst simple poles along $w^{\tilde{\mu}}(P_{f})$
and $q=\phi (z) dz^{2}$ is an integrable quadratic differential on $\hat{\mathbb C}$ that can have at worst simple poles along
$w^{\mu}(P_{f})$. This implies that $q=\sigma_{*}\tilde{q}$
is also the push-forward integrable quadratic differential
$$
q=E_{*}\tilde{q} =\phi (z) dz^{2}
$$
of $\tilde{q}$ by $E$.   To see this, recall from section~\ref{sec:Tpq} that $E$, and a choice of curves $L_i$ from the branch points, determine a finite set of domains $W_i$ on which $E$ is an unbranched covering to a domain homeomorphic to $\CC^*$.  Since $E$ restricted to each $W_i$ is either a topological model for $e^z$ or $z^k$, we may divide each $W_i$ into a collection of  fundamental domains on which $E$ is bijective. Therefore the coefficient $\phi (z)$ of $q$ is given by the formula
\begin{equation}~\label{pushforwardformula}
\phi(z)= ({\mathcal L}\tilde{\phi}) (z) =\sum_{E(w) = z}
\frac{\tilde{\phi} (w)}{(E'(w))^{2}} = \frac{1}{z^{2}} \sum_{E(w)=z} \frac{\tilde{\phi} (w)}{(\frac{P'(w)}{P(w)} +Q'(w))^{2}}
\end{equation}
where ${\mathcal L}$ is the standard transfer operator and $\tilde{\phi}$ is the coefficient of $\tilde{q}$. Thus
\begin{equation}~\label{pushforwardformula1}
q= \phi(z) dz^{2}= \frac{dz^{2}}{z^{2}} \sum_{E(w)=z} \frac{\tilde{\phi} (w)}{(\frac{P'(w)}{P(w)} +Q'(w))^{2}}
\end{equation}

It is clear that as a  quadratic differential defined on $\hat{\mathbb C}$,   we have
$$
||q||\leq ||\tilde{q}||.
$$
Since $q$ is integrable and $0$ and $\infty$ are isolated singularities, it follows that $q$ has at worst possible simple poles at these points so that the inequality holds on all of $\hat{\mathbb C}$.

By  formula~(\ref{pushforwardformula}), we have
$$
\langle \tilde{q} ,\tilde{\xi} \rangle =  \langle q, \xi \rangle
$$
which implies
$$
\|\tilde{\xi}\|  \leq \|\xi\|
$$
where this is the $L^{\infty}$ norm.
This gives another proof of Lemma~\ref{contractive}.
Furthermore, we have the following  stronger assertion

\medskip
\begin{lemma}~\label{infstrongcon}
$$
||q||<||\tilde{q}||
$$
and
$$
\|\tilde{\xi}\| < \|\xi\|.
$$
\end{lemma}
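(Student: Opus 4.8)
\textbf{Proof proposal for Lemma~\ref{infstrongcon}.}

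The plan is to exploit the fact that for $f \in \TE_{p,q}$ with $q \geq 1$, the covering $E : \hat{\CC}\setminus w^{\tilde\mu}(f^{-1}(P_f)) \to \hat{\CC}\setminus w^\mu(P_f)$ has \emph{infinite degree}: over each domain $W_i$ the map $E$ is a universal cover (modeled on $e^z$) or an unbranched cover of infinite degree, so a generic point $z$ has infinitely many preimages $w$ with $E(w)=z$. The push-forward formula~(\ref{pushforwardformula}) then expresses $\phi(z)$ as an \emph{infinite} series $\sum_{E(w)=z}\tilde\phi(w)/(E'(w))^2$. The key point is that passing from $\|\tilde q\|$ to $\|q\|$ loses mass: one has the pointwise bound $|\phi(z)|\,dz\,d\bar z \le \sum_{E(w)=z}|\tilde\phi(w)|\,dw\,d\bar w$ by the triangle inequality together with the change-of-variables Jacobian $|E'(w)|^2\,dz\,d\bar z = dw\,d\bar w$, and integrating over $\hat\CC$ gives $\|q\|\le \|\tilde q\|$. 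Strict inequality must come from showing the triangle inequality is strict on a set of positive measure, i.e. the terms $\tilde\phi(w)/(E'(w))^2$ genuinely cancel somewhere.

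The main steps I would carry out are as follows. First, observe that $\tilde q$ is a nonzero integrable meromorphic quadratic differential on $\hat\CC$ whose only possible poles are simple poles at the finite set $w^{\tilde\mu}(f^{-1}(P_f))$. Second, use the geometry of the $W_i$: fix a point $z_0$ not on any $w^\mu(L_i)$ and not equal to $0$ or $\infty$, so that $E^{-1}(z_0)$ is an infinite discrete set $\{w_k\}$ accumulating only at $\infty$ (and possibly at the points $a_i$ if $p>0$, but those are isolated). For the series $\sum_k \tilde\phi(w_k)/(E'(w_k))^2$ to have \emph{all} its terms with the same argument — the only way the triangle inequality can be an equality at $z_0$ — one needs $\arg\big(\tilde\phi(w_k)/(E'(w_k))^2\big)$ to be independent of $k$. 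Third, I would argue this cannot persist on a positive-measure set of $z_0$: if it did, then letting $z_0$ vary, one would get a quadratic differential whose pull-back $E^*q$ has a very rigid relation to $\tilde q$; in particular, writing $\tilde q = E^* q + (\text{error})$, equality in the norm forces $\tilde q$ to be, up to a positive function, the pull-back $E^*q$ — but $E^*q$ is not integrable on $\hat\CC$ (it has infinitely many poles, one over each pole of $q$ in each $W_i$, and it picks up the essential singularity at $\infty$), contradicting integrability of $\tilde q$. Hence $\|q\| < \|\tilde q\|$ strictly. Fourth, the inequality $\|\tilde\xi\| < \|\xi\|$ for the dual (Beltrami) vectors follows from the pairing identity $\langle \tilde q,\tilde\xi\rangle = \langle q,\xi\rangle$ valid for all $\tilde q$: since the co-derivative $\sigma^*$ strictly decreases quadratic-differential norm, its adjoint $\sigma_*$ on the (pre-dual) $L^\infty$ Beltrami space is strictly norm-decreasing on nonzero vectors, by the duality between the $L^1$ norm on quadratic differentials and the $L^\infty$ norm on Beltrami differentials (Teichm\"uller theory).

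The step I expect to be the main obstacle is making rigorous the claim that equality $\|q\| = \|\tilde q\|$ forces $\tilde q$ to be essentially a pull-back, and then deriving the contradiction with integrability. The clean way is probably this: equality in $\|q\|\le\|\tilde q\|$ means that for a.e.\ $z$ and for \emph{every} pair $w, w'$ with $E(w)=E(w')=z$ we have $\tilde\phi(w)/(E'(w))^2$ and $\tilde\phi(w')/(E'(w'))^2$ positively proportional. Picking two preimages $w, w'$ lying in the \emph{same} component $W_i$ differing by a deck transformation $T$ of the universal cover $E|_{W_i}$, this says $\tilde\phi(Tw)\,(T'(w))^2 / \tilde\phi(w)$ is a positive real, for a.e.\ $w\in W_i$ and all deck transformations $T$; i.e.\ the meromorphic function $\tilde\phi(Tw)(T'w)^2/\tilde\phi(w)$ is a positive real constant, hence (being holomorphic and real) a genuine positive constant $c_T$. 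Iterating a fixed $T$ of infinite order, $\tilde\phi(T^n w)(({T^n})'w)^2 = c_T^n \tilde\phi(w)$; but $T^n w \to$ the boundary of $W_i$ (a curve ending at $\infty$ along a Julia ray) while $\tilde\phi$ is meromorphic on all of $\hat\CC$ with only finitely many poles, so $\tilde\phi(T^n w)$ stays bounded away from the finitely many poles, and one checks $(({T^n})'w)^2$ does not decay fast enough — forcing $c_T = 1$, and then $\tilde\phi$ is $T$-invariant, hence descends to a meromorphic quadratic differential on the quotient $W_i/\langle T\rangle \cong \CC^*$, which combined with the finiteness of poles and integrability near $\infty$ is impossible. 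This rules out equality and gives the strict inequality. I would present this last argument carefully since it is where all the transcendental (infinite-degree) structure of $\TE_{p,q}$ is actually used.
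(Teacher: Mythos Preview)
Your opening is the same as the paper's: equality $\|q\|=\|\tilde q\|$ forces the triangle inequality to be tight a.e., so all fiber-terms $\tilde\phi(w)/(E'(w))^2$ over $E^{-1}(z)$ share a common argument. From there the paper takes a much shorter route. Read together with the Remark that immediately follows, the argument compares each term to the \emph{sum} rather than to another single term: the ratio $\tilde\phi(w)\big/\!\big(\phi(E(w))\,(E'(w))^2\big)$ is positive real for a.e.\ $w$, and being meromorphic it must be a positive constant, so $\tilde q = c\,E^*q$. Since $q\ne 0$ it has a pole at some $z_0\in\CC\setminus\{0\}$, and $E^{-1}(z_0)$ is infinite (this is exactly where $\deg Q\ge 1$ enters), so $\tilde q$ would have infinitely many poles --- impossible for an integrable quadratic differential on $\hat\CC$.

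Your deck-transformation route reaches for the same contradiction by a longer path, and as written it has a genuine gap. The relation $T^*\tilde q=c_T\,\tilde q$ on a universal-cover component $W_{j,1}$ is correct, but for general $E=Pe^{Q}$ the deck transformation $T$ is a conformal automorphism of an irregular simply connected domain, not a global translation; you offer no argument for the assertion that $|(T^n)'(w)|$ ``does not decay fast enough'' to force $c_T=1$, and for $\deg Q\ge 2$ this is not obvious. Even granting $c_T=1$, the quotient $W_{j,1}/\langle T\rangle$ is $W$, not $\CC^*$, and you still have to assemble the components and use that $\tilde\phi$ extends rationally across their boundaries. (A cleaner finish in your own framework: $T$ has infinite order and must permute the finitely many zeros and poles of the rational function $\tilde\phi$ lying in $W_{j,1}$, so there are none there; running this over all $W_{j,1}$ constrains $\tilde\phi$ severely near $\infty$.) But all of this is bypassed by the single observation $\tilde q=c\,E^*q$, which is the shortcut you are missing.
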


\begin{proof}
Suppose there is a $\tilde{q}$ such that $||q||=||\tilde{q}||\not= 0$.
Using the change of variable $E(w)=z$ on each fundamental domain we get
$$
\int_{\hat{\mathbb C}} \Big| \sum_{E(w)=z} \frac{\tilde{\phi} (w)}{(E'(w))^{2}}\Big| dz  \, d\overline{z}=
\int_{\hat{\mathbb C}} |\phi(z)|dz  \, d\overline{z} =\int_{\hat{\mathbb C}} |\tilde{\phi} (w)| \, dw d\overline{w}
$$
$$
= \sum_{i} \int_{W_{i}} |\tilde{\phi} (w)| \, dw d\overline{w}  = \int_{\hat{\mathbb C}} \sum_{i} \Big|\frac{\tilde{\phi} (w)}{(E'(w))^{2}}\Big| \, dz d\overline{z}.
$$
By the triangle inequality,   all the  factors $\frac{\tilde{\phi} (w)}{(E'(w))^{2}}$ in  $\sum_{E(w)=z} \frac{\tilde{\phi} (w)}{(E'(w))^{2}}$ have the same argument. That is,  there is a real number $a_{z}$ for every $z$ such that for any pair of points $w,w'$ with $E(w)=E(w')=z$,
$$
\frac{\tilde{\phi} (w)}{(E'(w))^{2}}=a_{z} \frac{\tilde{\phi} (w')}{(E'(w'))^{2}}.
$$
 Now formula~(\ref{pushforwardformula})   implies
$\phi (z) =\infty$ which cannot be;  this contradiction proves the lemma.
\end{proof}

\medskip
\begin{remark}
The real point here is that $E$ has infinite degree and any   $q$ has finitely many poles.  If  there were a $\tilde{q}$ with  $||q||=||\tilde{q}||\not= 0$ and if $Z$ is the set of poles of $\tilde{q}$, then
the poles of $q$ would be contained in the set $E(Z)\cup {\mathcal V}_{E}$, where ${\mathcal V}_{E}$ is the set of critical values of $E$.
Thus, by formula~(\ref{pushforwardformula}),
$$
E^{*} q =\phi (E(w)) dw^2 = d \tilde{q} (w),
$$
where $d$ is the degree of $E$. Furthermore,
$$
E^{-1} (E(Z)\cup {\mathcal{V}}_{E}) \subseteq Z \cup {\Omega}_{E}.
$$
Since  $d$ is infinite,  the last inclusion formula can not hold  since the left hand side is infinite and the right hand side is finite.
\end{remark}

 An immediate corollary is
 
\medskip 
\begin{corollary}~\label{strongcon}
For any two points $\tau$ and $\tilde{\tau}$ in $T_{f}$,
$$
d_{T}\Big(\sigma(\tau), \sigma(\tilde{\tau})\Big)< d_{T}(\tau, \tilde{\tau}).
$$
\end{corollary}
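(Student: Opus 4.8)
The plan is to deduce this by integrating the infinitesimal strict contraction of Lemma~\ref{infstrongcon} along a Teichm\"uller geodesic. Recall that $T_f$ is the finite-dimensional Teichm\"uller space $T(\hat{\mathbb C},P_f)$, that its cotangent space at a point is the space of integrable (meromorphic, at worst simple poles along the marked set) quadratic differentials with the $L^1$-norm $\|\cdot\|$, and that the Teichm\"uller metric $d_T$ (which, as recalled above, coincides with the Kobayashi metric) is the integrated form of the Finsler metric $\xi\mapsto\|\xi\|_T$ on tangent vectors, where $\|\cdot\|_T$ is the norm dual to $\|\cdot\|$ under the pairing $\langle q,\xi\rangle$ used above. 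The tangent map $\sigma_*\colon T_\tau T_f\to T_{\sigma(\tau)}T_f$ and the cotangent map $\sigma^*$ are adjoint: $\langle\tilde q,\sigma_*\xi\rangle=\langle\sigma^*\tilde q,\xi\rangle$.

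The first step is to promote Lemma~\ref{infstrongcon} to the pointwise statement that $\|\sigma_*\xi\|_T<\|\xi\|_T$ for every $\tau\in T_f$ and every nonzero $\xi\in T_\tau T_f$. If $\sigma_*\xi=0$ this is trivial; otherwise write
\[
\|\sigma_*\xi\|_T=\sup\{\,|\langle\tilde q,\sigma_*\xi\rangle|:\|\tilde q\|\le 1\,\}=\sup\{\,|\langle\sigma^*\tilde q,\xi\rangle|:\|\tilde q\|\le 1\,\}.
\]
Since $P_f$ is finite the cotangent space is finite-dimensional, so its closed unit ball is compact and the supremum is attained at some $\tilde q_0$ with $\|\tilde q_0\|=1$. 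Then Lemma~\ref{infstrongcon} gives $\|\sigma^*\tilde q_0\|<\|\tilde q_0\|=1$, whence $\|\sigma_*\xi\|_T=|\langle\sigma^*\tilde q_0,\xi\rangle|\le\|\sigma^*\tilde q_0\|\,\|\xi\|_T<\|\xi\|_T$.

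The second step converts this pointwise bound into a strict contraction of distances. Fix $\tau,\tilde\tau\in T_f$; by Teichm\"uller's theorem there is a geodesic $\gamma\colon[0,L]\to T_f$ realizing $L=d_T(\tau,\tilde\tau)$, parametrized so that $\|\gamma'(t)\|_T\equiv 1$ and $\gamma'(t)\ne 0$ for all $t$. The operator-norm function $t\mapsto\sup\{\|\sigma_*\xi\|_T:\xi\in T_{\gamma(t)}T_f,\ \|\xi\|_T=1\}$ is continuous (as $\sigma$ is holomorphic, so $\sigma_*$ varies continuously with the base point) and, by the first step, strictly less than $1$ at every $t$; since $[0,L]$ is compact it has a maximum $\kappa<1$. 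Estimating $d_T$ by the Finsler length of the path $\sigma\circ\gamma$ then gives
\[
d_T\big(\sigma(\tau),\sigma(\tilde\tau)\big)\le\int_0^L\|(\sigma\circ\gamma)'(t)\|_T\,dt=\int_0^L\|\sigma_*\gamma'(t)\|_T\,dt\le\kappa L<L=d_T(\tau,\tilde\tau),
\]
which is the assertion. There is no serious obstacle here: all the genuine content is in Lemma~\ref{infstrongcon} (the infinite degree of $E$ forcing the strict inequality), and the only point requiring care is that a pointwise infinitesimal strict contraction need not survive an infimum over all connecting paths — which is precisely why the estimate is run along a single geodesic, where compactness upgrades the pointwise bound to a uniform one.
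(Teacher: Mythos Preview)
Your argument is correct and is exactly the standard way to pass from the infinitesimal statement of Lemma~\ref{infstrongcon} to the metric inequality. The paper itself states the corollary without proof (``An immediate corollary is\dots''), so you have simply supplied the details it leaves implicit; indeed, the same idea---taking a maximum of the pointwise operator norm over a compact arc to obtain a uniform $\kappa<1$---reappears verbatim in the paper's proof of Theorem~\ref{main1}, where the curve $l_0$ joining $\tau_0$ to $\tau_1$ plays the role of your geodesic.
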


Furthermore,

\medskip
\begin{lemma}~\label{fixedpt}
If $\sigma$ has a fixed point in $T_{f}$, then this fixed point must be unique. This is equivalent to saying  that
 a post-singularly finite $f$ in ${\TE}_{p, q}$  is combinatorially equivalent to at most one  $(p,q)$-exponential map $E=Pe^{Q}$.
\end{lemma}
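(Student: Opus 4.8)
The plan is to read the lemma off from the strict contraction of $\sigma$ established in Corollary~\ref{strongcon}. For the uniqueness of the fixed point, suppose $\sigma$ had two distinct fixed points $\tau_{1},\tau_{2}\in T_{f}$. Since $T_{f}$ is finite dimensional and the Teichm\"uller and Kobayashi distances agree on it, $d_{T}(\tau_{1},\tau_{2})$ is a finite positive number, and applying Corollary~\ref{strongcon} to $\tau_{1},\tau_{2}$ together with $\sigma(\tau_{i})=\tau_{i}$ would give
$$
d_{T}(\tau_{1},\tau_{2})=d_{T}\big(\sigma(\tau_{1}),\sigma(\tau_{2})\big)<d_{T}(\tau_{1},\tau_{2}),
$$
which is absurd. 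Hence $\sigma$ has at most one fixed point in $T_{f}$.

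Next I would translate this into the statement about realizations. Recall that $f$ is combinatorially equivalent to some $(p,q)$-exponential map $E=Pe^{Q}$ exactly when $\sigma$ has a fixed point $\tau=[\mu]\in T_{f}$, a realization then being $E=w^{\mu}\circ f\circ (w^{\mu})^{-1}$, and conversely that a combinatorial equivalence $\phi\circ f=E\circ\psi$ produces the fixed point $[\mu_{\psi}]$, where $\mu_{\psi}$ is the Beltrami coefficient of $\psi$ (here one uses that $\phi^{-1}\circ\psi$ is isotopic to the identity rel $P_{f}$). So if $f$ is combinatorially equivalent to two $(p,q)$-exponential maps $E_{1}$ and $E_{2}$, the two associated fixed points of $\sigma$ coincide by the step just proved, say to $\tau=[\mu]$. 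Writing $E_{i}=w^{\mu_{i}}\circ f\circ (w^{\mu_{i}})^{-1}$ with $[\mu_{1}]=[\mu_{2}]=\tau$, the map $h=w^{\mu_{1}}\circ (w^{\mu_{2}})^{-1}$ is a quasiconformal self-homeomorphism of $\hat{\mathbb C}$ fixing $0,1,\infty$ with $h\circ E_{2}=E_{1}\circ h$. I would then argue that $h$ is conformal: its complex dilatation is $E_{2}$-invariant, and $E_{2}$, having an essential singularity at infinity and finite post-singular set, carries no nontrivial invariant dilatation --- this is the same infinite-degree rigidity exploited in Lemma~\ref{infstrongcon} and the remark following it. A conformal automorphism of $\hat{\mathbb C}$ fixing $0,1,\infty$ is the identity, so $E_{1}=E_{2}$, and, recalling that conformally equivalent $(p,q)$-exponential maps are identified, $f$ is combinatorially equivalent to at most one $(p,q)$-exponential map.

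I do not expect a genuine obstacle here. The first step is immediate from Corollary~\ref{strongcon}, and the rest is bookkeeping with the normalization at $0,1,\infty$ together with the standard fact that a quasiconformal conjugacy between two $(p,q)$-exponential maps is conformal. The only point that deserves attention is this last rigidity statement; it follows from the infinite degree of $E$ exactly as in the remark after Lemma~\ref{infstrongcon}, and if a self-contained argument is preferred one can appeal to the absence of invariant line fields for a post-singularly finite transcendental entire map.
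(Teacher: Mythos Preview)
The paper gives no proof of this lemma; it is stated immediately after Corollary~\ref{strongcon} and is meant to be read off from the strict inequality there. Your first paragraph is exactly that argument, so on the uniqueness of the fixed point you agree with the paper verbatim.

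Your second half, deducing that $E$ itself is unique, goes further than the paper, which simply asserts the equivalence (implicitly via the earlier lemma that fixed points of $\sigma$ correspond to realizations). One small slip: for an arbitrary representative $\mu$ of a fixed class $\tau$ the map $w^{\mu}\circ f\circ (w^{\mu})^{-1}$ need not be holomorphic; the realization coming out of the diagram in \S\ref{sec:suff} is $E=w^{\mu}\circ f\circ (w^{f^{*}\mu})^{-1}$, with $[f^{*}\mu]=[\mu]$. This does not damage your argument---you still get a quasiconformal $h$ fixing $0,1,\infty$ intertwining $E_{1}$ and $E_{2}$, and the invariant-dilatation/no-line-field reasoning (the infinite-degree point in the remark after Lemma~\ref{infstrongcon}) then forces $h$ to be conformal, hence the identity. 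So your elaboration is correct and supplies details the paper leaves implicit.
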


We can now finish the proof of the sufficiency in Theorem~\ref{main1}.

\begin{proof}[Proof of Theorem~\ref{main1}] 
Suppose $f\in {\TE}_{p,q}$ has bounded geometry with compactness.
Recall that the map defined by
\begin{equation}\label{thu iter}
E_{n}=w^{\mu_{n}}\circ f\circ (w^{\mu_{n+1}})^{-1}
\end{equation}
is a $(p,q)$-exponential map.

If $q=0$, $E_n$ is a polynomial and the theorem follows from the arguments given in~\cite{CJ} and \cite{DH}.
Note that if $P_f = \{0,1,\infty\}$, then $f$ is a universal covering map of $\mathbb{C}^*$ and is therefore combinatorially equivalent to $e^{2 \pi i z}$.
Thus in the following argument, we assume that $\#(P_f) \geq 4$. Then, given our normalization conventions and the bounded geometry hypothesis we see that
the functions  $E_{n}$, $n=0, 1, \ldots$ satisfy the following conditions:
\begin{itemize}
\item[1)] $m=\#(w^{\mu_{n}}(P_f))\geq 4$ is fixed.
\item[2)] $0, 1, \infty \in w^{\mu_{n}}(P_f)$.
\item[3)] $\Omega_{E_{n}}\cup \{ 0, 1, \infty\} \subseteq
E_{n}^{-1}(w^{\mu_{n}}(P_f))$.
\item[4)] there is a $b>0$ such that $d_{sp}(p_{n}, q_{n}) \geq b$ for any $p_{n}, q_{n}\in w^{\mu_{n}}(P_f)$.
\end{itemize}

As a consequence of the compactness, we have that in the sequence $\{ E_{n}\}_{n=1}^{\infty}$, there is a subsequence $\{ E_{n_{i}}\}_{i=1}^{\infty}$ converging to a map $E=Pe^{Q} \in \E_{p,q}$ where $P$ and $Q$ are polynomials of degrees $p$ and $q$ respectively.

Any integrable quadratic differential $q_{n} \in T^{*}_{\tau_{n}}{\mathcal T}_{f}$ has, at worst, simple poles in the finite set  $P_{n+1,f}= w^{\mu_{n+1}}(P_f)$.
Since $T^{*}_{\tau_{n}}{\mathcal T}_{f}$ is a finite dimensional linear space, there is a quadratic differential $q_{n, max}\in T^{*}_{\tau_{n}}{\mathcal T}_{f}$
with $\| q_{n,max}\|=1$ such that
$$
0 \leq a_{n}=\sup_{||q_{n}||=1} \|(E_{n})_{*}q_{n}|| = \|(E_{n})_{*}q_{n,max}\| <1.
$$
Moreover, by the bounded geometry condition,  the possible simple poles of $\{q_{n, max}\}_{n=1}^{\infty}$ lie in a
  compact set and hence these quadratic differentials lie in a compact subset of the  space of quadratic differentials on $\hat{\mathbb C}$ with, at worst, simples poles at $m=\#(P_{f})$ points.

Let
$$
a_{\tau_{0}} =\sup_{n\geq 0} a_{n}.
$$
Let  $\{n_{i}\}$  be a sequence of  integers such that the subsequence $a_{n_{i}}\to a_{\tau_{0}}$ as $i\to \infty$.
By compactness, $\{E_{n_{i}}\}_{i=0}^{\infty}$ has
a convergent subsequence, (for which we  use the same notation)
that converges to a holomorphic  map $E \in \E_{p,q}$.
Taking a further subsequence if necessary, we obtain a convergent sequence of sets $P_{n_{i},\tau_{0}} =w^{\mu_{n_{i}}}(P_{f})$  with limit set $X$.
By bounded geometry, $\#(X)=\#(P_{f})$ and  $d_{sp}(x, y) \geq b$ for any $x,y\in X$.
Thus we can find a   subsequence $\{ q_{n_{i}, max}\}$ converging to an integrable quadratic differential $q$ of norm $1$  whose only poles lie in $X$ and are simple.
Now by lemma~\ref{infstrongcon},
$$
a_{\tau_{0}} = ||E_{*} q|| <1.
$$

Thus we have proved  that there is an  $0< a_{\tau_{0}}< 1$, depending only on $b$ and $f$, such that
$$
\|\sigma_{*}\| \le
\|\sigma^{*}\| \le a_{\tau_{0}}.
$$
Let $l_{0}$ be a curve connecting $\tau_{0}$ and $\tau_{1}$ in $T_{f}$ and set $l_{n}=\sigma_{f}^{n}(l_{0})$ for $n\geq 1$. Then $l=\cup_{n=0}^{\infty}l_{n}$ is a curve in $T_f$
 connecting all the points $\{\tau_{n}\}_{n=0}^{\infty}$. For each point $\tilde{\tau}_{0}\in l_{0}$, we have $a_{\tilde{\tau}_{0}} <1$. Taking the maximum  gives  a uniform $a<1$ for all points in $l_0$.  Since $\sigma$ is holomorphic, $a$ is an upper bound for all points in $l$.  Therefore,

$$
d_{T} (\tau_{n+1}, \tau_{n}) \leq a \, d_{T}(\tau_{n}, \tau_{n-1})
$$
for all $n\geq 1$.
Hence, $\{ \tau_{n}\}_{n=0}^{\infty}$ is a convergent sequence with a unique limit point $\tau_{\infty}$ in $T_{f}$ and $\tau_{\infty}$  is
a fixed point of $\sigma$. This combining with Lemma~\ref{fixedpt} completes the proof of the sufficiency of Theorem~\ref{main1}.
\end{proof}

From our proofs of Theorem~\ref{necc} and Theorem~\ref{main1},  the final step in the   proof of the main theorem is to prove the compactness condition holds from bounded geometry.  This is in contrast to  the case of rational maps (see~\cite{Ji}) where the bounded geometry condition guarantees the compactness condition holds. In the case of $(p,q)$-exponential maps, 
the bounded geometry condition must be combined with some topological constraints to guarantee the compactness. The  topological constraints, together with the bounded geometry condition control the sizes of fundamental domains so that they are neither too small nor too big. Thus, before we prove      the compactness condition holds, we will describe a topological constraint for the two types of  map $f$ in the Main Theorem (Theorem~\ref{main2}).

\section{Proof of the Main Theorem (Theorem~\ref{main2}).}
\label{sec:proofmt}
In section~\ref{sec:Tpq} we defined two different  normalizations   for functions in $\TE_{p,q}$ that depend on whether or not $0$ is a fixed point of the map.  The topological constraints 
for post-singularly finite maps also follow this dichotomy.    

\medskip
\subsection{A topological constraint for $f\in \TE_{0,1}$ satisfying the hypotheses of Theorem 2.}
\label{top1}
Any such $f$ has no branch points  so $P_{f} =\cup_{k\geq 0} f^{k} (0) \cup\{\infty\}$ which is finite.  Since $0$ is omitted, the orbit of $0$ is pre-periodic. Let $c_{k}=f^{k}(0)$ for $k\geq 0$. By the pre-periodicity, there are  integers $k_{1}\geq 0$ and $l\geq 1$ such that 
$f^{l}(c_{k_{1}+1}) =   c_{k_{1}+1}$. This says that 
$$
\{ c_{k_{1}+1}, \ldots, c_{k_{1}+l}\}
$$
is a periodic orbit of period $l$. Let $k_{2} =k_{1}+l$.  
  Let $\gamma$ be a continuous curve 
connecting $c_{k_{1}}$ and $c_{k_{2}}$ in 
${\mathbb R}^{2}$ disjoint from $P_f$,  except for its endpoints.  Because 
$$
f(c_{k_{1}})=f(c_{k_{2}})=c_{k_{1}+1},
$$ 
the image curve $\delta= f(\gamma)$  is a closed curve.  

\medskip
\subsection{A topological constraint for $f\in \TE_{p,1}$ satisfying the hypotheses of Theorem 2.}
\label{top2}
Any such $f$ has exactly one non-zero simple branch point which we denote by $c$; 
$0$ is the only other branch point and it  has multiplicity $p-1$.  Then $f(0)=0$ and by our normalization, $f(c)=1$.  In this case
$$
P_{f} =\cup_{k\geq 1} f^{k} (c) \cup\{0, \infty\}.
$$
Again by the hypothesis of Theorem~\ref{main2},  $P_{f}$ is finite. Set $c_{k}=f^{k}(c)$ for $k\geq 0$. 

Suppose $c$ is not periodic. As above, there are integers $k_{1}\geq 0$ and $l\geq 1$ such that 
$f^{l}(c_{k_{1}+1}) =   c_{k_{1}+1}$.  Again,  
$$
\{ c_{k_{1}+1}, \ldots, c_{k_{1}+l}\}
$$
is a periodic orbit of period $l$. Let $k_{2} =k_{1}+l$.  

As above, let $\gamma$ be a continuous curve 
connecting $c_{k_{1}}$ and $c_{k_{2}}$ in 
${\mathbb R}^{2}$ disjoint from $P_f$,  except for its endpoints. Since 
$$
f(c_{k_{1}})=f(c_{k_{2}})=c_{k_{1}+1},
$$ 
the image curve $\delta= f(\gamma)$  is a closed curve.  

\subsection{Winding numbers}
\label{winding}
In each of the above cases, the {\em winding number} $\eta$ of the closed curve $\delta= f(\gamma)$ about $0$ essentially counts the number of fundamental domains 
between $c_{k_{1}}$ and $c_{k_{2}}$  and defines the ``distance'' between the fundamental domains.    
The following lemma is a crucial to proving that the compactness condition holds for each type of function in Theorem~\ref{main2}.

 \medskip 
\begin{lemma}\label{winding1}
The winding number $\eta$ is  does not change  under the Thurston iteration procedure.
\end{lemma}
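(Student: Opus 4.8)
The plan is to show that the winding number produced at step $n$ of the iteration is literally the winding number, about $0$, of the $w^{\mu_n}$-image of the base loop $\delta=f(\gamma)$, and then to use the fact that an orientation-preserving homeomorphism fixing $0$ cannot change such a winding number. First I would fix once and for all a representative curve $\gamma$ from $c_{k_1}$ to $c_{k_2}$ as in \S\ref{top1} (resp.\ \S\ref{top2}), together with the loop $\delta=f(\gamma)$, which is based at $c_{k_1+1}$ because $f(c_{k_1})=f(c_{k_2})=c_{k_1+1}$. Note that $\delta$ misses $0$: in the case $f\in\TE_{0,1}$ one has $0\notin f(\mathbb{R}^2)$, and in the case $f\in\TE_{p,1}$ one has $f^{-1}(0)=\{0\}\subset P_f$ while $\gamma$ avoids $P_f$ except at its endpoints, neither of which is $0$. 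Since $f$ is continuous and carries $\mathbb{R}^2\setminus P_f$ into $\CC^{*}$, homotopic choices of $\gamma$ rel endpoints in $\mathbb{R}^2\setminus P_f$ yield freely homotopic loops in $\CC^{*}$; hence the winding number of $\delta$ about $0$ depends only on the homotopy class of $\gamma$, and it is legitimate to speak of ``the'' winding number attached to any transported copy of this class.

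Next I would carry the pair $(\gamma,\delta)$ through the scheme. For $n\geq 0$ set $\gamma_n:=w^{\mu_{n+1}}(\gamma)$; since $w^{\mu_{n+1}}$ is a homeomorphism of $\hat{\mathbb C}$ fixing $0$ and $\infty$, this is a curve in $\mathbb{R}^2\setminus P_{f,n+1}$ joining $w^{\mu_{n+1}}(c_{k_1})$ to $w^{\mu_{n+1}}(c_{k_2})$, and it is the natural representative, in the $\tau_{n+1}$-marked plane, of the fixed homotopy class of $\gamma$. Using the defining relation of the Thurston iteration from \S\ref{sec:suff}, namely $E_n\circ w^{\mu_{n+1}}=w^{\mu_n}\circ f$, together with $f(c_{k_1})=f(c_{k_2})=c_{k_1+1}$, the image
\[
\delta_n:=E_n(\gamma_n)=\bigl(w^{\mu_n}\circ f\circ (w^{\mu_{n+1}})^{-1}\bigr)\bigl(w^{\mu_{n+1}}(\gamma)\bigr)=w^{\mu_n}\bigl(f(\gamma)\bigr)=w^{\mu_n}(\delta)
\]
is again a loop, based at $w^{\mu_n}(c_{k_1+1})$, and it is exactly the $w^{\mu_n}$-image of $\delta$. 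In particular $\delta_n$ avoids $0$, because $\delta$ does and $w^{\mu_n}(0)=0$, so its winding number $\eta_n$ about $0$ is well defined; by construction $\eta_n$ is the winding number attached to the iteration at step $n$.

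Finally I would invoke the elementary topological fact that an orientation-preserving self-homeomorphism of $\hat{\mathbb C}$ fixing $0$ induces the identity on $\pi_1(\hat{\mathbb C}\setminus\{0,\infty\})\cong\ZZ$, hence preserves winding numbers about $0$. Each $w^{\mu_n}$ is quasiconformal, therefore orientation preserving, and fixes $0$; applying this to $\delta_n=w^{\mu_n}(\delta)$ gives $\eta_n=\eta$ for every $n\geq 0$, which is the assertion of the lemma.

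I do not expect a genuine obstacle here: the content of the lemma is the single displayed identity $\delta_n=w^{\mu_n}(\delta)$ together with a standard $\pi_1$ argument. The only points that need care are the index bookkeeping in the commutative square of \S\ref{sec:suff} --- the curve $\gamma$ lives in the source of $E_n$, so it is transported by $w^{\mu_{n+1}}$ while its image loop is transported by $w^{\mu_n}$ --- and the observation, made above, that the winding number is a function of the homotopy class of $\gamma$ alone, so that no ambiguity arises from the freedom in choosing the representative curve at each level.
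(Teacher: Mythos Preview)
Your proposal is correct and follows essentially the same route as the paper: both establish the identity $\delta_n=E_n(w^{\mu_{n+1}}(\gamma))=w^{\mu_n}(\delta)$ from the commutative square defining $E_n$, and then conclude by observing that the normalized quasiconformal map $w^{\mu_n}$, being an orientation-preserving self-homeomorphism of the sphere fixing $0$ and $\infty$, preserves the winding number about $0$. You are slightly more explicit than the paper in verifying that $\delta$ actually avoids $0$ and in invoking the $\pi_1(\CC^{*})$ argument, but the content is the same.
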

\begin{proof}

Given $\tau_{0}=[\mu_{0}]\in T_f$, let $\tau_{n}=\sigma^{n} (\tau_{0}) =[\mu_{n}]$
be the sequence generated by $\sigma$. Let $w^{\mu_{n}}$ be the normalized quasiconformal map with Beltrami coefficient $\mu_{n}$.
Then in either of the above situations, 
$$
E_{n} = w^{\mu_{n}}\circ f\circ (w^{\mu_{n+1}})^{-1}\in {\mathcal E}_{p, 1} 
$$
since it  preserves $\mu_0$ and  is holomorphic. See the following diagram.
$$
\begin{array}{ccc} \hat{\mathbb C}& {\buildrel w^{\mu_{n+1}} \over
\longrightarrow} & \hat{\mathbb C}\cr
\downarrow f &&\downarrow E_{n}\cr
\hat{\mathbb C}& {\buildrel w^{\mu_{n}}
\over \longrightarrow} & \hat{\mathbb C}.
\end{array}
$$
Let $c_{k, n} =w^{\mu_{n}}(c_{k})$. 
The continuous curve
$$
\gamma_{n+1}=w^{\mu_{n+1}} (\gamma)
$$
goes from  $c_{k_{1}, n+1}$ to $c_{k_{2}, n+1}$.  
The image curve is 
$$
\delta_{n}=E_{n} (\gamma_{n+1}) =   w^{\mu_{n}}(f((w^{\mu_{n+1}})^{-1} (\gamma_{n+1})))=w^{\mu_{n}} (f (\gamma)) = w^{\mu_{n}} (\delta).
$$
Note that $w^{\mu_{n}}$ fixes $0,1,\infty$. Thus $\delta_{n}$ is a closed curve  through the point $c_{k_{1}+1, n}= w^{\mu_{n}} (c_{k_{1}+1})$ and it has    winding number $\eta$ around $0$.
\end{proof}
 
 The argument that this invariance  plus bounded geometry implies the compactness is different in each of these two cases. We present these arguments in the two subsections below.  Let set 
$$
P_{n}=P_{f, n} =w^{\mu_{n}} (P_{f}), \quad n=0, 1, 2, \ldots.
$$

 \subsection{The compactness condition  for $f\in \TE_{0,1}$ satisfying the hypotheses of  Theorem~\ref{main2}.}
 \label{comp1} 
In this case, all the functions in the Thurston iteration have the form $E_{n} (z) = e^{\lambda_{n} z}$.   From our normalization, we have that 
$$
0, \; 1=E_{n}(0),\;  E_{n}(1) =e^{\lambda_{n}} \in P_{n+1}.
$$  
Note that in this case $E_{n}(1) \neq1$.  When $f$ has bounded geometry,  
the spherical distance between $1$ and  $E_{n}(1)$ is bounded away from zero.  
That is,  there is a constant $\kappa>0$ such that
$$
\kappa \leq |\lambda_{n}|, \;\; \forall n\geq 0.
$$
 
Now we prove that the sequence $\{|\lambda_{n}|\}$ is also bounded above. 
We can compute
$$
\eta = \frac{1}{2\pi i} \oint_{\delta_{n}} \frac{1}{w} dw = \frac{1}{2\pi i} \int_{\gamma_{n+1}} \frac{E_{n}'(z)}{E_{n}(z)} dz= \frac{1}{2\pi i} \int_{\gamma_{n+1}} \lambda_{n}dz.
$$
The integral therefore depends only on the endpoints and  we have 
$$
\eta= \frac{1}{2\pi i} \int_{\gamma_{n+1}} \lambda_{n}dz=\frac{\lambda_{n}}{2\pi i} (c_{k_{2}, n+1}-c_{k_{1}, n+1}). 
$$  
Since $0$ is omitted, it can not be periodic.  Therefore, both $c_{k_{2}, n+1}\not= c_{k_{1}, n+1}\in P_{n+1}$,  
so by bounded geometry  there is a positive constant which we again  denote by $\kappa$ such that 
$$
|c_{k_{2}, n+1}-c_{k_{1}, n+1}|\geq \kappa. 
$$ 
This gives us the estimate 
$$
|\lambda_{n}| \leq \frac{2\pi \eta}{|c_{k_{2}, n+1}-c_{k_{1}, n+1}|}\leq \frac{2\pi \eta}{\kappa}. 
$$
which proves that $\{ E_{n}(z)\}_{n=0}^{\infty}$ is contained in a compact family in $\E_{0,1}$. This combined with Theorem~\ref{necc} and Theorem~\ref{main1} completes the proof of Theorem~\ref{main2} for $f\in \TE_{0,1}$.   

\subsection{The compactness condition for $f\in TE_{p,1}$ satisfying the hypotheses in Theorem~\ref{main2}.}
\label{comp2} 
For such a map,  $f(0)=0$ and $0$ is a branch point of multiplicity $p-1$. And $f$ has exactly one non-zero branch point $c$ with $f(c)=1$. All the functions in the Thurston iteration have forms
$$
E_{n} (z) = \alpha_{n} z^{p} e^{\lambda_{n} z}, \quad
\alpha_{n}=e^{p} \Big( -\frac{\lambda_{n}}{p}\Big)^{p}.
$$
Note that $E_{n}(0)=0$ and $0$ is a critical point of multiplicity $p-1$. It is also the asymptotic value and hence it has no other pre-images.  
Moreover, $E_{n}(z)$ has exactly one non-zero simple critical point 
$$
c_{n}=-\frac{p}{\lambda_{n}}=w^{\mu_{n}}(c).
$$
and $\alpha_{n}$ is defined by the normalization condition $E_{n}(c_{n})=1$.
    
If $c$ is periodic, then $c\in P_{f}$. This implies that $c_{n} \not= 0, \infty\in P_{n}$ and thus its spherical distance from either $0$ or $\infty$ is bounded below.  That is,  there are two constants $0<\kappa<K<\infty$ such that
$$
\kappa\leq |\lambda_{n}|\leq K, \quad \;\forall n>0.
$$   
This implies that the sequence $\{E_{n}\}_{n=1}^{\infty}$ is contained in a compact subset. 

Now suppose $c$ is not periodic. By the the hypotheses in Theorem~\ref{main2}, $f(c)=1$ is also not periodic. 
This implies that $k_{1}\geq 1$. 

Recalling the notation $P_{n} =w^{\mu_{n}} (P_{f})$,  we have 
$$
0, \quad 1=E_{n}(c_{n}), \quad E_{n}(1) = e^{p} \Big( -\frac{\lambda_{n}}{p}\Big)^{p} e^{\lambda_{n}} \in P_{n}.
$$
Let $c_{k, n} =w^{\mu_{n}}(c_{k})$. Then $c_{k,n}\in P_{n}$ for all $k\geq 1$.  
Let $\delta_{n} =w^{\mu_{n}}(\delta)$ and $\gamma_{n} =w^{\mu_{n}}(\gamma)$.  

When $f$ has bounded geometry,  since $E_{n}(1) \not=0$, its spherical   distance from $0$ is bounded below.   This implies that  the sequence $\{|\lambda_{n}|\}$ is bounded below; that is, there is a constant $\kappa>0$ such that
$$
\kappa\leq |\lambda_{n}|, \;\; \forall n>0.
$$

By our hypothesis,  $c_{k_{1},n+1}\not= c_{k_{2},n+1}$ both belong to $P_{n+1}$ and bounded geometry  implies there are two constants, which we still denote by
$0<\kappa< K<\infty$, such  that 
$$
\kappa\leq |c_{k_{2}, n+1}|, \;\;  |c_{k_{1}, n+1}|,\;\;   |c_{k_{2}, n+1}-c_{k_{1},n+1}| \leq K, \quad \forall\; n\geq 1.
$$
 
Now we prove that the sequence $\{|\lambda_{n}|\}$ is also bounded above. 
Recall that when we chose $\gamma$, we assumed it did not go through $0$ and thus by the normalization, 
none of the $\gamma_{n+1}$ go through $0$ either.  Therefore, for each $n$ we can find a simply connected domain 
$D_{n+1} \subset \gamma_{n+1}$ that does not contain $0$. 
As in the previous section we compute 
$$
\eta=  \frac{1}{2\pi i} \oint_{\delta_{n}} \frac{1}{w} dw = \frac{1}{2\pi i} \int_{\gamma_{n+1}} \frac{E_{n}'(z)}{E_{n}(z)} dz
= \frac{1}{2\pi i} \int_{\gamma_{n+1}} \Big(\frac{p}{z} +\lambda_{n}\Big)dz
$$
so that as above
 $$
2 \pi  i \eta  = \int_{\gamma_{n+1}} \frac{p}{z} dz +\int_{\gamma_{n+1}} \lambda_{n} dz = \int_{\gamma_{n+1}} \frac{p}{z} dz + \lambda_{n} (c_{k_{2},n+1}-c_{k_{1},n+1}).
$$
Rewriting we have 
$$
\lambda_{n} (c_{k_{2},n+1}-c_{k_{1},n+1}) =2 \pi  i \eta - \int_{\gamma_{n+1}} \frac{p}{z} dz.
$$
This implies that
$$
\kappa |\lambda_{n}| \leq |\lambda_{n} (c_{k_{2},n+1}-c_{k_{1},n+1})| \leq  2 \pi  \eta + \Big|\int_{\gamma_{n+1}}\frac{p}{z} dz\Big| 
$$
so that if we can bound the integral on the right we will be done. 

Notice that  $\log z$ can be defined as an analytic function on the simply connected domain $D_{n+1}$ containing $\gamma_{n+1}$ that we chose above.  
We take $\log z =\log |z| +2\pi i \arg (z)$ as the principal branch, with $0\leq \arg (z)<2\pi$ . 
We then estimate 
$$
\Big|\int_{\gamma_{n+1}}\frac{p}{z} dz\Big|=| \log c_{k_{2},n+1} - \log c_{k_{1},n+1}|
$$
$$
\leq |\log |c_{k_{2},n+1}|-\log |c_{k_{1},n+1}| | + |\arg(c_{k_{2},n+1}) - \arg (c_{k_{1},n+1})| 
$$
$$
\leq 2\pi \eta + (\log K -\log \kappa) +4\pi .
$$
 Finally we have  
 $$
 |\lambda_{n}|  \leq \frac{2\pi \eta +(\log K-\log \kappa)  +4\pi}{\kappa}.
 $$ 
which proves that $\{ E_{n}(z)\}_{n=0}^{\infty}$ is contained in a compact subset in $\E_{p,1}$. 
This combined with Theorem~\ref{necc} and Theorem~\ref{main1} completes the proof of Theorem~\ref{main2} for $f\in \TE_{p,1}$.  

\section{Some Remarks}
One can formally define a Thurston obstruction for a post-singularly finite $(p,q)$-topological exponential map $f$ with $p\geq 0$ and $q\geq 1$.  Because such an $f$ is a branched covering of infinite degree, however, many arguments  in the proof of the Thurston theorem  that use the finiteness of the covering in an essential way  do not apply (see~\cite{DH,JZ,Ji}) .  Thus, how to define an analog of the Thurston obstruction to characterize a $(p,q)$-topological exponential map $f$ is not   clear to us.
We can, however, define an analog  of the {\em canonical} Thurston obstruction for a $(p,q)$-topological exponential map $f$ which depends on the hyperbolic lengths of curves (see to~\cite{Pi,CJ,Ji}).

Let $\sigma$ be the induced map on the Teichm\"uller space $T_{f}$. For any $\tau_{0}\in T_{f}$, and  for $n\geq 1$,
let $\tau_{n}=\sigma^{n}(\tau_{0})$. Let  $\gamma$ denote a simple closed non-peripheral curve
in ${\mathbb C}\setminus P_{f}$. Define
$$
\Gamma_{c}=\{ \gamma \;|\; \forall \tau_{0}\in T_{f}, \; l_{\tau_{n}}(\gamma) \to 0 \quad \hbox{as}\quad n \to \infty\}.
$$
We have

\begin{corollary}
If $\Gamma_{c}\not=\emptyset$, then $f$ has no bounded geometry and therefore, $f$ is not combinatorially equivalent to a $(p,q)$-exponential map.
\end{corollary}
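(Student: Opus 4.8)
The plan is to argue by contradiction directly from the hyperbolic formulation of bounded geometry, and then to read off the second assertion from the necessity part of the characterization, Theorem~\ref{necc}. The key observation is a mismatch of quantifiers that works in our favor: bounded geometry only asserts the existence of \emph{one} basepoint $\tau_{0}$ along whose orbit every non-peripheral geodesic stays long, whereas membership in $\Gamma_{c}$ forces the relevant geodesic to shrink to zero along the orbit of \emph{every} basepoint.

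First I would assume, for contradiction, that $f$ has bounded geometry. By the hyperbolic version of the definition there are a constant $a>0$ and a point $\tau_{0}\in T_{f}$ such that $l_{\tau_{n}}(\gamma)\geq a$ for every $n\geq 0$ and every non-peripheral simple closed curve $\gamma$ in $R=\hat{\mathbb C}\setminus P_{f}$, where $\tau_{n}=\sigma^{n}(\tau_{0})$. Since $\Gamma_{c}\neq\emptyset$ by hypothesis, pick $\gamma\in\Gamma_{c}$; by definition $\gamma$ is a non-peripheral simple closed curve in $\hat{\mathbb C}\setminus P_{f}$, hence an admissible test curve for the inequality above. Applying the defining property of $\Gamma_{c}$ to this \emph{particular} basepoint $\tau_{0}$ gives $l_{\tau_{n}}(\gamma)\to 0$ as $n\to\infty$, so that $l_{\tau_{n}}(\gamma)<a$ for all sufficiently large $n$. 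This contradicts $l_{\tau_{n}}(\gamma)\geq a$, and therefore $f$ cannot have bounded geometry.

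Finally, the second assertion is immediate: by Theorem~\ref{necc}, a post-singularly finite $f\in\TE_{p,q}$ is combinatorially equivalent to a $(p,q)$-exponential map $Pe^{Q}$ if and only if $f$ has bounded geometry; since we have just shown that $f$ does not have bounded geometry, $f$ is not combinatorially equivalent to any $(p,q)$-exponential map. There is essentially no technical obstacle in this argument; the only point that needs care is the bookkeeping of quantifiers described above, together with the remark that every $\gamma\in\Gamma_{c}$ is indeed of the type over which the bounded geometry inequality ranges. One could instead run the same contradiction through the spherical version of bounded geometry by invoking Lemma~\ref{sg}, but the hyperbolic version makes the contradiction most transparent.
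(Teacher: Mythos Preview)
Your argument is correct and is exactly the implicit reasoning behind the paper's statement: the corollary is stated without proof because it is immediate from the hyperbolic definition of bounded geometry together with the necessity direction of Theorem~\ref{necc}. Your careful handling of the quantifiers (the ``for all $\tau_{0}$'' in the definition of $\Gamma_{c}$ applied to the particular $\tau_{0}$ supplied by bounded geometry) is precisely the point, and nothing more is needed.
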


The converse should also be true but we have no proof at this time.  The difficulty is that in the characterization of  post-critically finite    rational maps, many arguments in the proof of the converse statement use the finiteness of the covering in an essential way (see~\cite{Pi,CJ}).

A {\em Levy cycle} is a special Thurston obstruction for rational maps. It can be defined for a $(p,q)$-topological exponential map $f$ as follows.
A set
$$
\Gamma=\{ \gamma_{1}, \cdots, \gamma_{n}\}
$$
of simple closed non-peripheral curves in ${\mathbb C}\setminus P_{f}$ is called a Levy cycle
if for any $\gamma_{i}\in \Gamma$, there is a simple closed non-peripheral curve component
$\gamma'$ of $f^{-1}(\gamma_{i})$ such that $\gamma'$ is homotopic to $\gamma_{i-1}$ (we identify $\gamma_0$ with $\gamma_n$) rel $P_{f}$
and $f: \gamma'\to \gamma_{i}$ is a homeomorphism. Following a result in~\cite{HSS}, we   have

\begin{corollary}[\cite{HSS}]
Suppose $f$ is a $(0,1)$-topological exponential map with finite post-singular set. Then $f$ has no Levy cycle if and only if $f$ has bounded geometry.
\end{corollary}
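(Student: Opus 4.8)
The plan is to prove this corollary by inserting, between the two topological conditions, the single condition ``$f$ is combinatorially equivalent to a $(0,1)$-exponential map'' and reading off each of the two claimed equivalences from an already available theorem. For a post-singularly finite $f\in\TE_{0,1}$ consider the three statements: (i) $f$ has no Levy cycle; (ii) $f$ is combinatorially equivalent to some $E\in\E_{0,1}$, i.e. to a map $e^{\lambda z}$; (iii) $f$ has bounded geometry. The theorem of Hubbard--Schleicher--Shishikura~\cite{HSS} is precisely the equivalence (i)$\Leftrightarrow$(ii) (their Levy cycle criterion, stated for the normalization $\lambda e^{z}$, characterizes realizability as an exponential map). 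The $\TE_{0,1}$ case of our Theorem~\ref{main2} is precisely the equivalence (ii)$\Leftrightarrow$(iii): its ``only if'' half is the necessity argument of Theorem~\ref{necc}, and its ``if'' half is the fixed-point argument of Theorem~\ref{main1} together with the verification of the compactness condition for $\TE_{0,1}$ carried out in~\S\ref{comp1}. Composing the two equivalences yields (i)$\Leftrightarrow$(iii), which is the assertion.

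For clarity I would then record the two implications explicitly. For ``no Levy cycle $\Rightarrow$ bounded geometry'': by~\cite{HSS}, $f$ is combinatorially equivalent to some $e^{\lambda z}\in\E_{0,1}$, hence $\sigma=\sigma_{f}$ has a fixed point $\tau_{0}\in T_{f}$ and $\tau_{n}\equiv\tau_{0}$; the hyperbolic metric on $\hat{\mathbb C}\setminus P_{f}$ determined by $\tau_{0}$ then provides a uniform positive lower bound for $l_{\tau_{n}}(\gamma)$ over all $n\geq 0$ and all non-peripheral simple closed curves $\gamma$, which is the hyperbolic form of bounded geometry --- this is verbatim the proof of Theorem~\ref{necc}. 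For ``bounded geometry $\Rightarrow$ no Levy cycle'': by the $\TE_{0,1}$ case of Theorem~\ref{main2}, bounded geometry forces $f$ to be combinatorially equivalent to some $e^{\lambda z}$, so $f$ is realized; and a realized topological exponential map has no Levy cycle, again by~\cite{HSS}, since a Levy cycle obstructs realizability. This closes the argument.

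The substantive input is the cited result of~\cite{HSS}; the other ingredient, Theorem~\ref{main2} for $\TE_{0,1}$, is established in~\S\ref{sec:main result}--\S\ref{comp1}. So the real obstacle does not lie in the deduction above but inside~\cite{HSS}: either extracting a Levy cycle when realizability fails, or dually showing a Levy cycle cannot persist once the Thurston iteration converges. If one wanted to bypass~\cite{HSS} and prove ``bounded geometry $\Rightarrow$ no Levy cycle'' internally, the natural route would be a Gr\"otzsch/length--area estimate: a Levy cycle $\Gamma=\{\gamma_{1},\dots,\gamma_{n}\}$ with each $f\colon\gamma'\to\gamma_{i}$ a homeomorphism pulls back with no loss of modulus apart from the extra punctures of $f^{-1}(P_{f})$, which under bounded geometry cannot degenerate, so along $\{\tau_{n}\}$ the curves $\gamma_{i}$ would have hyperbolic length bounded both above and below; combining this with the strict contraction of the Teichm\"uller metric by $\sigma$ (Corollary~\ref{strongcon}) to force a genuine invariant curve system, and then excluding such a system for an exponential map by using its unique asymptotic value at $0$, is where I expect the genuine difficulty to be --- exactly the point flagged in the remarks above as unavailable for general $\TE_{p,q}$.
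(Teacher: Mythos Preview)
Your proposal is correct and is exactly the intended argument: the paper does not supply its own proof of this corollary but simply writes ``Following a result in~\cite{HSS}, we have\ldots'', leaving the reader to compose the Levy-cycle characterization of realizability from~\cite{HSS} with the bounded-geometry characterization established here (the $\TE_{0,1}$ case of Theorem~\ref{main2}). Your chain (i)$\Leftrightarrow$(ii) via~\cite{HSS} and (ii)$\Leftrightarrow$(iii) via Theorem~\ref{necc} and \S\ref{comp1} is precisely that composition.
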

We believe a similar result holds for all post-singularly finite maps in $\TE_{p,q}$ but we do not have a proof at this time.

\bigskip

\vspace*{20pt}
\noindent Tao Chen, Department of Mathematics, CUNY Graduate
School, New York, NY 10016. Email: chentaofdh@gmail.com

\vspace*{5pt}
\noindent Yunping Jiang, Department of Mathematics, Queens College of CUNY,
Flushing, NY 11367 and Department of Mathematics, CUNY Graduate
School, New York, NY 10016. Email: yunping.jiang@qc.cuny.edu

\vspace*{5pt}
\noindent Linda Keen, Department of Mathematics, Lehman College of CUNY,
Bronx, NY 10468
and Department of Mathematics, CUNY Graduate
School, New York, NY 10016
Email: LINDA.KEEN@lehman.cuny.edu

\end{document}